\tikzstyle{arrow} = [thick,->,>=stealth]
\tikzstyle{process} = [rectangle, minimum width=3cm, minimum height=1cm, text centered, draw=black, fill=orange!30]
\tikzstyle{decision} = [rectangle, minimum width=3cm, minimum height=1cm, text centered, draw=black, fill=blue!30]
\tikzstyle{observation} = [rectangle, minimum width=3cm, minimum height=1cm, text centered, draw=black, fill=yellow!30]
\DeclareMathOperator{\EX}{\mathbb{E}}
\DeclareMathOperator*{\argmax}{arg\,max}
\newcommand{\diff}{\mathop{}\!d}
\newcommand{\indep}{\perp \!\!\! \perp}
\newcommand*{\QEDB}{\null\nobreak\hfill\ensuremath{\square}}%
\newtheorem{theorem}{Theorem}
\newtheorem{lemma}{Lemma}
\begin{document}
	
\title{An instrumental variable approach under dependent censoring}

\author{$\text{Gilles Crommen}^1$ \\ \url{gilles.crommen@kuleuven.be} \and $\text{Jad Beyhum}^{1,2}$ \\ \url{jad.beyhum@gmail.com} \and $\text{Ingrid Van Keilegom}^1$ \\ \url{ingrid.vankeilegom@kuleuven.be}}

\date{$^1$ORSTAT, KU Leuven, Naamsestraat 69, Leuven, 3000, Belgium \\ $^2$CREST, ENSAI, 51 Rue Blaise Pascal, Bruz, 35170, France}

\maketitle

\noindent \textbf{Acknowledgments:} Financial support from the European Research Council (2016-2022, Horizon 2020 / ERC grant agreement No. 694409) is gratefully acknowledged. The authors are grateful to Sara Rutten and Ilias Willems for their useful comments.

\begin{abstract}
\noindent This paper considers the problem of inferring the causal effect of a variable $Z$ on a dependently censored survival time $T$. We allow for unobserved confounding variables, such that the error term of the regression model for $T$ is correlated with the confounded variable $Z$. Moreover, $T$ is subject to dependent censoring. This means that $T$ is right censored by a censoring time $C$, which is dependent on $T$ (even after conditioning out the effects of the measured covariates). A control function approach, relying on an instrumental variable, is leveraged to tackle the confounding issue. Further, it is assumed that $T$ and $C$ follow a joint regression model with bivariate Gaussian error terms and an unspecified covariance matrix such that the dependent censoring can be handled in a flexible manner. Conditions under which the model is identifiable are given, a two-step estimation procedure is proposed, and it is shown that the resulting estimator is consistent and asymptotically normal. Simulations are used to confirm the validity and finite-sample performance of the estimation procedure. Finally, the proposed method is used to estimate the causal effect of job training programs on unemployment duration.
\end{abstract}

\bigskip \noindent \small \textbf{Keywords}: dependent censoring, causal inference, instrumental variable, control function, survival analysis. \normalsize

\section{Introduction}
When estimating the effect of a variable $Z$ on a censored survival time $T$, unmeasured confounding can be a possible source of bias. Under certain assumptions, the instrumental variable (IV) approach allows us to negate this bias without having to observe any of the confounding variables. Within survival analysis, IV methods have recently been receiving increased attention to estimate causal effects on censored outcomes. However, almost all of these approaches assume that the censoring time is (conditionally) independent of the survival time. In this work, we propose an IV method that can identify causal effects while allowing for dependent censoring. Precisely, let $T$ depend log-linearly on a vector of observed covariates $X$, a confounded variable $Z$ and some error term, denoted by $u_T$, which represents unobserved heterogeneity. There is a confounding issue when $Z$ and $u_T$ are correlated. A common example is when $Z$ is a non-randomized treatment variable, even after conditioning on the covariates $X$. This correlation with the error term implies that the causal effect of $Z$ on $T$ cannot be identified from the conditional distribution of $T$ on $(X,Z)$. Further, we introduce a right censoring mechanism by way of the censoring time $C$, such that only the minimum of $T$ and $C$ is observed through the follow-up time $Y=\min\{T,C\}$ and the censoring indicator $\Delta=\mathbbm{1}(T \leq C)$. We do not assume that $T$ and $C$ are independent, even after conditioning on $(X,Z)$. This possible dependence creates an additional statistical issue since the distribution of $T$ cannot be recovered from that of $(Y,\Delta)$ without further assumptions. 

\subsection{Approach} The confounding issue is tackled by the control function approach. This method uses an instrumental variable $\tilde{W}$ and the observed covariates $X$ to split $u_T$ into two parts, one which is correlated with $Z$ and one which is not. It is important that $\tilde{W}$ is a valid instrumental variable for $Z$, which means that $\tilde{W}$ is independent of $u_T$, sufficiently correlated with $Z$ and only affects $T$ through $Z$. The part of $u_T$ that is correlated with $Z$ is the control function $V$, which is assumed to be linear in $u_T$. This control function, also denoted by $g_\gamma$, is a parametric function of $(Z,X,\tilde{W})$ that is assumed to capture all confounding. Note that the parameter $\gamma$ is unknown. In this work, we propose two possible control functions for which the form follows from the relation specified between $Z$ and $(X,\tilde{W})$. Moreover, the control function allows us to estimate the causal effect of $Z$ on $T$ from the conditional distribution of $T$ on $(X,Z,V)$.
	
To allow for dependent censoring, we let $T$ and $C$, conditional on $(X,Z,V)$, follow a joint Gaussian regression model with an unspecified covariance matrix. The need for this assumption is explained in Section \ref{secmodelspec}, where it is formally introduced. Moreover, it is shown that the model is identifiable, which means that we can identify not only the causal effect of $Z$ on $T$ but also the association parameter between $T$ and $C$. This can be seen as surprising, since we only observe the minimum of $T$ and $C$ through the follow-up time $Y$ and the censoring indicator $\Delta$. In order to estimate the model parameters, a two-step estimation method is proposed. The first step estimates the parameter $\gamma$, which is required to construct the regressor $V$. Therefore, this control function $V$ can also be seen as a generated regressor. The second step uses maximum likelihood to estimate parameters of interest such as the correlation between $T$ and $C$ and the causal effect of $Z$ on $T$. Note that the second step uses the generated regressor $V$, such that a correction for the randomness coming from the first step needs to be applied to get asymptotically valid standard errors. To implement this correction, we treat the two steps as a joint generalized method of moments estimator with their moment conditions stacked in one vector. This allows us to prove consistency and asymptotic normality of the parameter estimates. Using various simulation settings, we show that the estimator demonstrates excellent finite sample performances. We illustrate the procedure by evaluating the effect of federally funded job training services on unemployment duration in the United States.

\subsection{Related literature} This paper is firstly related to the literature on dependent censoring. In the survival analysis literature, it is usually assumed that the survival time $T$ is independent of the right censoring time $C$, which is called independent censoring. However, it is easy to think of situations where this assumption is not a reasonable one to make. A common example of the independent censoring assumption being doubtful can be found in transplant studies. The survival time (time to death) is likely dependent on the censoring time (time to transplant), since selection for transplant is based on the patient's medical condition. In this case we would expect a positive dependence between $T$ and $C$, as usually the most ill patients are selected for transplant \citep{StaplinND2015Dcip}. In the literature, many methods have been proposed to handle dependent censoring. An important result comes from \citet{TsiatisA.1975ANAo}, who proved that it is impossible to identify the joint distribution of two failure times by their minimum in a fully nonparametric way. Because of this, more information about the dependence and/or marginal distributions of $T$ and $C$ is needed to identify their joint distribution. The most popular approaches are based on copulas, and \citet{ZHENGMING1995Eoms} were the first to apply this idea. Under the assumption of a fully known copula for the joint distribution of $T$ and $C$, a nonparametric estimator of the marginals was proposed. This estimator is called the copula-graphic estimator, which extends the \citet{KaplanE.L.1958NEfI} estimator to the dependent censoring case. \citet{RivestLouis-Paul2001AMAt} further investigated the copula-graphic estimator for Archimedean copulas. Note that both of these methods rely on a completely known copula. In particular, this means that the association parameter specifying the dependence between $T$ and $C$ is assumed to be known, which is often not the case in practice. The copula methods were extended to include covariates by 
\citet{BraekersRoel2005Acef},
\citet{HuangXuelin2008RSAw} and
\citet{SujicaAleksandar2018Tcei} among others. Nevertheless, these methods still rely on a fully known copula. More recently a new method was proposed by \citet{CzadoClaudia2021Dcbo}, which does not require the association parameter to be known. As a trade-off, this requires the marginals to be fully parametric for the association parameter to be identifiable. \citet{semiparderesa2020} and \citet{DeresaNegeraWakgari2020Fpmf} propose a semiparametric and parametric transformed joint regression model respectively, where the transformed variables $T$ and $C$ follow a bivariate normal distribution after adjusting for covariates. \citet{DERESA2020106879diftypecens} extends the parametric transformed joint regression model to allow for different types of censoring. The present paper relies on a similar Gaussian model as \citet{DeresaNegeraWakgari2020Fpmf}, but nevertheless differs from it as we allow for confounding. Therefore, our method can be seen as a generalization of the one proposed by \citet{DeresaNegeraWakgari2020Fpmf}. The added complication comes from the generated regressor that is introduced by the control function.

Next, the present work falls within the instrumental variable and control function literature. A confounding issue could occur due to a multitude of reasons such as noncompliance \citep{AngristJoshuaD.1996IoCE}, sample selection \citep{HeckmanJamesJ.1979SSBa}, measurement error or omitting relevant variables. The control function approach used in this work has been discussed extensively in the literature on confounding and endogeneity by \citet{LEE20071131}, \citet{Navarro2010} and \citet{WooldridgeJeffreyM2015CFMi} among others. The idea is that adding an appropriate parametric control function to the regression, which is estimated in the first stage using a valid instrument, solves the confounding issue. The advantages of this approach are that it is computationally simple, and that it can handle complicated models that are nonlinear in the confounded variable in a parsimonious manner. It is interesting to note that using the control function method creates a generated regressor problem. See \citet{pagangenreg}, \citet{oxley1993econometric} and \citet{sperlichpredvar} for an overview of possible methods and issues raised when using generated regressors. Moreover, \citet{escanciano2016identification} look at a general framework for two-step estimators with a non-parametric first step. In this work, they consider the example of a control function estimator for a binary choice model with an endogenous regressor.

Finally, the last string of research linked to this paper is that of instrumental variable methods for right censored data. We first discuss methods assuming that the censoring mechanism is independent. Some papers follow a nonparametric approach assuming that both $Z$ and $\tilde{W}$ are categorical: \citet{frandsen2015treatment}, \citet{SantAnnaPedroH.C2016PEwR} and \citet{BeyhumJad2021NIRW}. Other approaches are semiparametric, such as \citet{BijwaardGovertE2005Cfsc},
\citet{LiJialiang2015Ivah}, \citet{TchetgenTchetgenEricJ2015IVEi}, \citet{ChernozhukovVictor2015Qrwc} and \citet{beyhum2022instrumental} among others. Note that \citet{TchetgenTchetgenEricJ2015IVEi} also propose a control function approach. \citet{centorrino2021nonparametric} study nonparametric estimation with continuous regressors.
Confounding has also been discussed in a competing risks framework by
\citet{RichardsonAmy2017Nbiv}, 
\citet{ZhengCheng2017Ivwc},
\citet{MartinussenTorben2020Ivew} and
\citet{BeyhumJad2021Ania}. Research on confounding within a dependent censoring framework is sparse. Firstly, \citet{robins2000correcting} look at a correction for noncompliance and dependent censoring. However, they make the strong assumption that conditional on the treatment arm and the recorded history of six time-dependent covariates, $C$ does not further depend on $T$. It is clear that this assumption is violated if there is a variable affecting both $T$ and $C$ that is not observed. Secondly, \citet{KhanShakeeb2009Ioec} discuss an endogenously censored regression model, but they make a strong assumption (IV2, page 110 in \citet{KhanShakeeb2009Ioec}) regarding the relationship between the instruments and the covariates. An example of this assumption being violated is when the support of the natural logarithm of $C$ given $Z$ and $X$ is the whole real line, which is allowed for in our model. Finally, \citet{blanco2020bounds} look at treatment effects on duration outcomes under censoring, selection, and noncompliance. However, they derive bounds on the causal effect of $Z$ on $T$ instead of point estimates.

\subsection{Outline} This paper is structured as follows: Section \ref{secmodel} specifies the model to be studied and describes some distributions such that the expected log-likelihood can be defined. Section \ref{secidentmodel} derives the identification results and Section \ref{secestmodel} outlines the estimation procedure. Section \ref{secconsenasnorm} shows consistency and asymptotic normality for the estimator described in Section \ref{secestmodel}. Section \ref{secestasvar} describes how the asymptotic variance can be estimated. The technical details for the three theorems outlined in Section \ref{secidentenest} can be found in Appendix \ref{Theorems}. Simulation results and an empirical application regarding the impact of Job Training Partnership Act (JTPA) programs on time until employment are described in Sections \ref{secsim} and \ref{secdataappl} respectively. The code used for both of these sections can be found on \url{https://github.com/GillesCrommen}.

\section{The model}\label{secmodel}
\subsection{Model specification}\label{secmodelspec}
Let $T$ and $C$ be the logarithm of the survival time and the censoring time respectively. Contrary to the usual approach in survival analysis, the survival and censoring time are not assumed to be independent, even after conditioning on the measured covariates. Because $T$ and $C$ censor each other, only one of them is observed through the follow-up time $Y = \text{min}\{T, C\}$ and the censoring indicator $\Delta = \mathbbm{1}(T\leq C)$. The measured covariates that have an influence on both $T$ and $C$ are given by $X=(1,\Tilde{X}^\top)^\top$ and $Z$, where $\Tilde{X}$ and $Z$ are of dimension $m$ and 1 respectively. We are interested in estimating the causal effect of $Z$ on $T$, which is denoted by $\alpha_T$. A structural joint regression model can be specified as follows:
\begin{equation}\label{naivemodel}
	\left\{
	\begin{array}{ll}
		T = X^\top\beta_T + Z\alpha_T + u_T\\
		C = X^\top\beta_C + Z\alpha_C + u_C
	\end{array}
	\right.,
\end{equation}
where $Z$ is a confounded variable and $(u_T,u_C)$ are error terms representing the unobserved heterogeneity. As mentioned in the introduction, there is a confounding issue when $Z$ and $(u_T,u_C)$ are correlated. This is the case when $Z$, conditional on $X$, is non-randomized (e.g. a treatment indicator when there is non-compliance). Moreover, assuming $Z \indep (u_T,u_C)$, where $\indep$ denotes statistical independence, would lead to biased estimates of the causal effect $\alpha_T$ since $\EX[Z u_T] \neq 0$. Note that \cite{DeresaNegeraWakgari2020Fpmf} assume $Z \indep (u_T,u_C),$ with $(u_T,u_C)$ bivariate Gaussian.
	
We tackle this problem by using the control function approach, an IV method that has been discussed by \citet{LEE20071131}, \citet{Navarro2010} and \citet{WooldridgeJeffreyM2015CFMi} among others. The idea is to split the unobserved confounding variables $u_T$ and $u_C$ into two parts, one which is correlated with $Z$ and one which is not. The part of $u_T$ and $u_C$ that is correlated with $Z$ is the control function $V,$ where it is assumed that $(u_T, u_C)$ is related to $V$ through a linear model. This can be written as 
\begin{equation*}
	u_T  = \lambda_T V + \epsilon_T \qquad \text{and} \qquad u_C = \lambda_C V + \epsilon_C,
\end{equation*}
with $(\lambda_T,\lambda_C) \in \mathbb{R}^2$ and $(\epsilon_T, \epsilon_C) \indep Z$ such that only $V$ is correlated to $Z$. If $V$ were known, we could estimate the causal effect $\alpha_T$ by applying the method of \cite{DeresaNegeraWakgari2020Fpmf} to the following model:
\begin{equation}\label{model}
		\left\{
		\begin{array}{ll}
			T = X^\top{\beta_T} + Z \alpha_T + V\lambda_T  + \epsilon_T\\
			C = X^\top{\beta_C} + Z \alpha_C + V\lambda_C + \epsilon_C
		\end{array}
		\right..
\end{equation}
However, in practice, the regressor $V$ is unknown and needs to be estimated. To do this, we start by introducing a valid scalar instrument for $Z$, which is denoted by $\Tilde{W}$. By valid instrument, it is meant that: (i) $\tilde{W} \indep (u_T,u_C)$, (ii) $\tilde{W}$ is sufficiently correlated with $Z$ and (iii) $\tilde{W}$ only affects $(T,C)$ through $Z$. Further, we define $W=(X^\top,\Tilde{W})^\top$ for ease of notation. The instrument allows us to isolate the endogenous variation in $Z$. This is because, by definition, $V$ needs to be the part of $Z$ that does not depend on $W$. We achieve this by specifying a mapping $g_\gamma$, depending on a parameter $\gamma$, such that $V=g_\gamma(Z,W)$ is indeed the part of $Z$ that does not depend on $W.$ Note that the function $g$ follows from the reduced form (which is specified by the analyst), but the parameter $\gamma$ is unknown and needs to be estimated. We discuss specific examples and choices of $g$ in Section \ref{sectioncontfunc}. Moreover, it is assumed that:
\begin{enumerate}[label=(A\arabic*),left=0.25\leftmargin]
	\item
	$
	\begin{pmatrix}
		\epsilon_T\\
		\epsilon_C
	\end{pmatrix}
	\sim N_2\bigg(
	\begin{pmatrix}
		0\\
		0
	\end{pmatrix},\quad \Sigma_\epsilon = 
	\begin{pmatrix}
		{\sigma^2_T} & \rho \sigma_T\sigma_C\\
		\rho\sigma_T\sigma_C & {\sigma^2_C}
	\end{pmatrix}\bigg),$
	\\\\with $\Sigma_\epsilon$ positive definite ($\sigma_T,\sigma_C > 0$ and $\lvert\rho\rvert<1$). \label{A1}
	\item $(\epsilon_T,\epsilon_C) \indep (W,Z)$. \label{A2}
	\item The covariance matrix of $(\tilde{X}^\top,Z,V)$ is full rank and $\text{Var}(\tilde{W})>0$. \label{A3}
	\item The probabilities $\mathbb{P}(Y=T \mid W,Z)$ and $\mathbb{P}(Y=C \mid W,Z)$ are both strictly positive almost surely. \label{A4}
\end{enumerate}
While Assumptions \ref{A2}, \ref{A3} and \ref{A4} are commonly made within a survival or instrumental variable context, Assumption \ref{A1} is not. As mentioned in the introduction, \citet{TsiatisA.1975ANAo} proved that it is impossible to identify the joint distribution of two failure times by their minimum in a fully nonparametric way. Because of this result, we will need to make some assumptions regarding the dependence and/or marginal distributions of $T$ and $C$ in order to identify their joint distribution. When dependent censoring is still present after conditioning on the covariates, there are two common approaches that can be considered. The first one consists of specifying a fully known copula for the joint distribution of $T$ and $C$, while leaving the marginals unspecified (see \citet{emura2018analysis} for more details). This means that the association parameter, which describes the dependence between $T$ and $C$, is assumed to be known. As this is often not the case in practice, we opt to use a different method. At the cost of using the fully parametric model that follows from Assumption \ref{A1}, it will later be shown by Theorem \ref{thrmident} that we can actually identify the association parameter $\rho$. We deem this to be an acceptable price to pay, as there is no good way of choosing the association parameter in practice.

\subsection{Control function}\label{sectioncontfunc}
But how do we specify the control function? In the literature, different control functions are proposed which depend on a model for the relationship between $Z$ and $W$. Following \citet{wooldridge2010econometric} and \citet{Navarro2010}, we give two examples of possible control functions that will be used throughout the paper. Consider first the case where $Z$ is a continuous random variable and the relation between $Z$ and $W$ follows a linear model, that is
\begin{equation}\label{cont}
	Z=W^\top \gamma + \nu \qquad \text{with} \qquad \EX[\nu W]=0, 
\end{equation}
where $\nu$ is an unobserved error term and $\gamma\in \mathbb{R}^{m+2}$.
In this setting it is natural to set $V = g_\gamma(Z,W) = Z-W^\top \gamma$ such that $V$ is the confounded part of $Z$, that is, the part that does not depend on $W$. Another, more involved, example is when $Z$ is a binary random variable and the relation between $Z,W$ and $\nu$ is specified as 
\begin{equation}\label{bin}
	Z = \mathbbm{1}(W^\top{\gamma} - \nu  > 0) \qquad \text{with} \qquad \nu \indep W.
\end{equation}
Since we cannot directly separate $\nu$ from $Z$ and $W$, we let \begin{equation}\label{control_f}V = g_\gamma(Z,W) = Z\EX[\nu   \mid  W^\top{\gamma} > \nu ] + (1-Z)\EX[\nu   \mid   W^\top{\gamma} < \nu ].\end{equation} Then, the function $g$ is known, up to $\gamma$, when the distribution of $\nu$ is known. This specification of the control function is discussed and justified in Section 19.6.1 and Section 21.4.2 by \citet{wooldridge2010econometric}. If $\nu\sim N(0,1)$ or $\nu$ follows a standard logistic distribution, we have a probit or logit model for $Z$ respectively. Specific expressions of $g$ for the probit and logit model can be found in Appendix \ref{CF}. Moreover, when $Z$ is binary, \citet{TchetgenTchetgenEricJ2015IVEi} give another example of a possible control function: $$ V=Z - \mathbb{P}(Z = 1 \mid W). $$ Note that throughout the paper, we will use $V$ and $g_\gamma(Z,W)$ interchangeably.

\subsection{Useful distributions and definitions}\label{secdistr}
Using the assumptions that have been made so far, some conditional distributions and densities are derived. They are useful in proving the identification theorem and to define the estimator in Section \ref{secidentenest}. The expected log-likelihood function is also defined. 

For a given $\theta=(\beta_T,\alpha_T,\lambda_T,\beta_C,\alpha_C,\lambda_C,\sigma_T,\sigma_C,\rho)^\top$ and  $\gamma$, we define $F_{T \mid W,Z}(\cdot \mid w,z,\gamma;\theta)$ and $F_{C \mid W,Z}(\cdot \mid w,z,\gamma;\theta)$ as the conditional distribution function of $T$ and $C$ given $W=w =(x^\top,\tilde{w})^\top$ and $Z=z$, respectively. Thanks to Assumptions \ref{A1} and \ref{A2}, we have that:
$$
F_{T \mid W,Z}(t \mid w,z,\gamma;\theta) = \Phi\bigg(\frac{t-x^\top\beta_T - z\alpha_T - g_\gamma(z,w)\lambda_T}{\sigma_T}\bigg),$$
$$F_{C \mid W,Z}(c\mid w,z,\gamma;\theta) = \Phi\bigg(\frac{c-x^\top\beta_C - z\alpha_C - g_\gamma(z,w)\lambda_C}{\sigma_C}\bigg),
$$
with $\Phi$ the cumulative distribution function of a standard normal variable. It follows that for a given $\gamma$ and $\theta$, the conditional density functions of $T$ and $C$ given $W=w$ and $Z=z$ are, respectively:
$$
f_{T \mid W,Z}(t\mid w,z,\gamma;\theta) = {\sigma_T}^{-1} \phi\bigg(\frac{t-x^\top{\beta_T} - z\alpha_T - g_\gamma(z,w)\lambda_T}{\sigma_T}\bigg),$$
$$f_{C \mid W,Z}(c\mid w,z,\gamma;\theta) = {\sigma_C}^{-1} \phi\bigg(\frac{c-x^\top{\beta_C} - z\alpha_C - g_\gamma(z,w)\lambda_C}{\sigma_C}\bigg),
$$
where $\phi$ is the density function of a standard normal variable. For ease of notation, define $b_C = y-x^\top{\beta_C} - z\alpha_C - g_\gamma(z,w)\lambda_C$ and 
$b_T = y-x^\top{\beta_T} - z\alpha_T - g_\gamma(z,w)\lambda_T$. 
The sub-distribution function $F_{Y,\Delta \mid W,Z}(\cdot,1 \mid w,z,\gamma ;\theta)$ of $(Y,\Delta)$ given $(W,Z)$ and $(\gamma,\theta)$ can be derived as follows:
\begin{align*}
	F_{Y,\Delta \mid W,Z}(y,1\mid w,z,\gamma ;\theta) &= \mathbb{P}(Y \leq y, \Delta=1\mid W=w,Z=z) \\& = \mathbb{P}(Y \leq y, T \leq C\mid W=w,Z=z)
	\\& = \mathbb{P}(\epsilon_T\leq b_T, b_C-b_T + \epsilon_T \leq \epsilon_C).
\end{align*}
This expression is equivalent to
$$
\int_{-\infty}^{ b_T} \mathbb{P}(\epsilon_C \geq b_C - b_T + e \mid  \epsilon_T= e)f_{\epsilon_T}(e) \diff e.
$$
Since $(\epsilon_C \mid \epsilon_T=e) \sim N\big(\rho \frac{\sigma_C}{\sigma_T}e,\sigma_C^2(1-{\rho}^2)\big)$ and $\epsilon_T \sim N(0,\sigma_T^2)$, it follows that
\begin{align*}
	f_{Y,\Delta \mid W,Z}(y,1\mid w,z,\gamma ;\theta)  =\frac{1}{\sigma_T}\bigg[1-\Phi \bigg(\frac{b_C - \rho \frac{\sigma_C}{\sigma_T}b_T}{\sigma_C(1-{\rho}^2)^{\sfrac{1}{2}}}\bigg)\bigg]  \phi\bigg(\frac{b_T}{\sigma_T}\bigg).
\end{align*}
Using the same arguments, it can be shown that
\begin{align*}
	f_{Y,\Delta \mid W,Z}(y,0\mid w,z,\gamma ;\theta) =\frac{1}{\sigma_C}\bigg[1-\Phi \bigg(\frac{b_T - \rho \frac{\sigma_T}{\sigma_C}b_C}{\sigma_T(1-{\rho}^2)^{\sfrac{1}{2}}}\bigg)\bigg] \phi\bigg(\frac{b_C}{\sigma_C}\bigg).
\end{align*}
Since
$$
\mathbb{P}(Y \leq y) = \mathbb{P}(T \leq y) + \mathbb{P}(C \leq y)-\mathbb{P}(T\leq y, C\leq y),$$ we have that 
\begin{align*}
	F_{Y \mid W,Z}(y\mid w,z,\gamma ;\theta) & = \Phi\bigg(\frac{b_T}{\sigma_T}\bigg) +
	\Phi\bigg(\frac{b_C}{\sigma_C}\bigg) -\Phi\bigg(\frac{b_T}{\sigma_T}, \frac{b_C}{\sigma_C}; \rho\bigg),
\end{align*}
where $\Phi(\cdot,\cdot,\rho)$ is the distribution function of a bivariate normal distribution with covariance matrix $\begin{pmatrix}
	1 & \rho \\
	\rho & 1
\end{pmatrix}$.  Further, let
$$
S = (Y,\Delta,\Tilde{X},\Tilde{W},Z) \text{ with distribution function $G$ on } \mathcal{G} = \mathbb{R} \times \{0,1\} \times \mathbb{R}^m \times \mathbb{R} \times \mathbb{R},
$$
and
\begin{equation*}
	\ell:\mathcal{G} \times \Gamma \times \Theta \to \mathbb{R}: (s,\gamma,\theta) \mapsto \ell(s,\gamma,\theta) =  \text{log} f_{Y,\Delta \mid W,Z}(y,\delta \mid w,z,\gamma ;\theta),
\end{equation*}
where $\Theta \subset \{\theta:
(\beta_T,\alpha_T,\lambda_T,\beta_C,\alpha_C,\lambda_C) \in \mathbb{R}^{2m+6}, (\sigma_T,\sigma_C) \in \mathbb{R}^2_{>0}, \rho \in (-1,1)\}$ is the parameter space of $\theta$ and $\Gamma $ the parameter space of $\gamma$ (usually $\Gamma \subset \mathbb{R}^{m+2}$). The expected log-likelihood can be defined as follows:
$$
L(\gamma,\theta)=\EX\big[\ell(S, \gamma,\theta)\big]= \int_{\mathcal{G}} \ell(s, \gamma,\theta)\diff G(s).
$$

\section{Model identification and estimation }\label{secidentenest}
\subsection{Identification of the model}\label{secidentmodel}
We will start by showing that model \eqref{model} is identifiable in the sense that two different values of the parameter vector $(\gamma,\theta)$ result in two different distributions of $S$. Let $(\gamma^*,\theta^*)$ denote the true parameter vector. In order to prove the identifiability of the model, it will be assumed that:
\begin{enumerate}[label=(A\arabic*),resume,left=0.25\leftmargin]
	\item $\gamma^*$ is identified. \label{A5}
\end{enumerate}
Considering again the examples from Section \ref{sectioncontfunc}, when $Z$ is a continuous random variable for which \eqref{cont} holds, it is well known that the assumption that the covariance matrix of $(\tilde{X},\tilde{W})$ is full rank implies Assumption \ref{A5}. When $Z$ is a binary random variable for which \eqref{bin} holds, the assumption that the covariance matrix of $(\tilde{X},\tilde{W})$ is full rank together with a known distributional assumption on $\nu$ (e.g. $\nu \sim N(0,1)$ or $\nu \sim \text{Logistic}(0,1)$) implies Assumption \ref{A5} as shown by \citet{manskiident1988}.

\begin{theorem}\label{thrmident}
	Under Assumptions \ref{A1}-\ref{A5}, suppose that $(T_1,C_1)$ and $(T_2,C_2)$ satisfy model \eqref{model} with $(\gamma, \theta)$ and $(\gamma^*, \theta^*)$ as parameter vectors respectively. If $f_{Y_1,\Delta_1 \mid W,Z}(\cdot,k\mid w,z,\gamma ;\theta) \equiv f_{Y_2,\Delta_2 \mid W,Z}(\cdot,k\mid w,z,\gamma^* ;\theta^*)$ for almost every $(w,z)$, then
	$$
	\gamma = \gamma^* \quad \text{and} \quad \theta = \theta^*.
	$$
\end{theorem}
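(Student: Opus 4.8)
The plan is to establish the two conclusions in turn: first $\gamma=\gamma^*$, and then $\theta=\theta^*$, the latter by reducing the functional equality of the sub-densities to a finite-dimensional identification problem for a bivariate normal. The parameter $\gamma$ is a feature of the reduced form linking $Z$ to $W$ (equation \eqref{cont} or \eqref{bin}), hence of the law of $(Z,W)$ alone, which is common to both data-generating processes. Assumption \ref{A5} states precisely that this reduced-form parameter is identified, so $\gamma=\gamma^*$. Consequently the generated regressor is common to both parameterizations, $V=g_\gamma(Z,W)=g_{\gamma^*}(Z,W)$, and it remains only to identify $\theta$ while treating $(X,Z,V)$ as a fixed covariate vector.

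The core step is pointwise identification. Fix $(w,z)$ and abbreviate the linear predictors $\mu_T=x^\top\beta_T+z\alpha_T+v\lambda_T$ and $\mu_C=x^\top\beta_C+z\alpha_C+v\lambda_C$ (with starred analogues), so that $b_T=y-\mu_T$ and $b_C=y-\mu_C$. By hypothesis the sub-densities $f_{Y,\Delta\mid W,Z}(\cdot,k\mid w,z,\cdot)$ coincide, as functions of $y$, for $k\in\{0,1\}$. I would extract the marginal parameters from the tails of $f_{Y,\Delta\mid W,Z}(\cdot,1\mid\cdots)$, which factorizes as $\sigma_T^{-1}\phi(b_T/\sigma_T)\,[1-\Phi(h(y))]$ with $h$ affine in $y$ of slope $a=(1-\rho\sigma_C/\sigma_T)/(\sigma_C\sqrt{1-\rho^2})$. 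In whichever tail drives $h(y)\to-\infty$ (the left tail if $a>0$, the right tail if $a<0$, either if $a=0$; such a tail always exists), the bracket tends to a positive constant, so $\frac{d^2}{dy^2}\log f_{Y,\Delta\mid W,Z}(y,1\mid\cdots)\to-\sigma_T^{-2}$ and the first-order term then delivers $\mu_T$; this identifies $(\mu_T,\sigma_T)$ from the $k=1$ sub-density alone, and symmetrically $(\mu_C,\sigma_C)$ from the $k=0$ one. With the marginals known, the bracket, hence the affine map $h$ via the strict monotonicity of $\Phi$, is identified for all $y$, and its slope and intercept pin down $\rho$ (the whole function, rather than a single value, is used to fix the sign of $\rho$). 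Equality of the sub-densities therefore forces $\mu_T(w,z)=\mu_T^*(w,z)$, $\mu_C(w,z)=\mu_C^*(w,z)$, $\sigma_T=\sigma_T^*$, $\sigma_C=\sigma_C^*$ and $\rho=\rho^*$ for almost every $(w,z)$.

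I expect this pointwise step to be the main obstacle: it is exactly where the impossibility result of \citet{TsiatisA.1975ANAo} must be circumvented, and the argument succeeds only because the Gaussian specification \ref{A1} rigidifies the shape of the sub-densities. The delicate part is separating each marginal scale from the correlation-driven discount factor $1-\Phi(h)$, for which the Mills-ratio tail asymptotics above, together with the attendant case distinction on the sign of $a$, must be carried out carefully.

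Finally I would close with a linear-algebra argument. From $\mu_T\equiv\mu_T^*$ almost everywhere, the variable $x^\top(\beta_T-\beta_T^*)+z(\alpha_T-\alpha_T^*)+v(\lambda_T-\lambda_T^*)$ vanishes almost surely; since $X=(1,\tilde{X}^\top)^\top$ carries an intercept and the covariance matrix of $(\tilde{X}^\top,Z,V)$ is full rank by Assumption \ref{A3}, the variables $1,\tilde{X}_1,\dots,\tilde{X}_m,Z,V$ admit no nontrivial almost-sure linear relation, so all coefficient differences are zero: $\beta_T=\beta_T^*$, $\alpha_T=\alpha_T^*$, $\lambda_T=\lambda_T^*$. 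The same reasoning applied to $\mu_C$ gives the $C$-coefficients, and together with $\sigma_T=\sigma_T^*$, $\sigma_C=\sigma_C^*$ and $\rho=\rho^*$ this yields $\theta=\theta^*$, completing the proof.
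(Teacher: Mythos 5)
Your route is sound in outline and genuinely different from the paper's. After identifying $\gamma$ from \ref{A5} exactly as you do, the paper proceeds by a seven-case analysis on the signs of the four quantities $\pi_{jk}$ built from $1-\rho_j\sigma_{C_j}/\sigma_{T_j}$ and $1-\rho_j\sigma_{T_j}/\sigma_{C_j}$, rewriting each sub-density as $\mathbb{P}(\xi>y\mid W,Z)\,f_{T\mid W,Z}(y\mid\cdot)$ and invoking limits of \emph{ratios} of normal densities (Proposition A.1 of Deresa and Van Keilegom, Lemmas 2.3--2.4 of Basu) to equate the marginal parameters or derive contradictions; $\rho$ is then recovered from the equality of the joint distribution function $F_{Y\mid W,Z}$. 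Your direct tail read-off of $(\mu_T,\sigma_T)$ from a single sub-density is correct and replaces both the case analysis and the external lemmas, with one point to make explicit: the two parameterizations need not put the "good" tail (where $h\to-\infty$) on the same side, so the well-defined functional you should compare is $\max\bigl(\lim_{y\to-\infty},\lim_{y\to+\infty}\bigr)(\log f)''=-\sigma_T^{-2}$, the opposite tail giving $-\sigma_T^{-2}-a^2$ by the Mills-ratio expansion. The closing linear-algebra step from \ref{A3} is fine and is what the paper's appeal to Proposition A.1 accomplishes.

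The step that fails as written is ``its slope and intercept pin down $\rho$.'' Write $r=\sigma_C/\sigma_T$ and $g(\rho)=(1-\rho r)/\sqrt{1-\rho^2}$, so the slope of $h$ is $g(\rho)/\sigma_C$ and its intercept is $(-\mu_C+\rho r\mu_T)/(\sigma_C\sqrt{1-\rho^2})$. If $\mu_T\neq\mu_C$, the combination $\mu_T\cdot(\text{slope})+(\text{intercept})=(\mu_T-\mu_C)/(\sigma_C\sqrt{1-\rho^2})$ recovers $|\rho|$ and the slope then recovers its sign, so your claim holds. But if $\mu_T=\mu_C$ the intercept is $-\mu_T$ times the slope and carries no new information, and $g$ is not injective on $(-1,1)$ when $r<1$, since $g'(\rho)=(\rho-r)(1-\rho^2)^{-3/2}$ changes sign at $\rho=r$: for $r=1/2$ one has $g(0)=g(4/5)=1$, so $\rho=0$ and $\rho=4/5$ generate the \emph{same} $k=1$ sub-density whenever the two linear predictors coincide. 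The theorem is not threatened, and the fix is one line you already have the ingredients for: the $k=0$ sub-density gives the second slope function $\tilde g(\rho)=(1-\rho/r)/\sqrt{1-\rho^2}$, and at least one of $g,\tilde g$ is strictly monotone on $(-1,1)$ (namely $g$ if $r\ge1$ and $\tilde g$ if $r\le1$), so equality of both sub-densities forces $\rho=\rho^*$. You must use both sub-densities here; the paper sidesteps the issue by matching $F_{Y\mid W,Z}$ instead.
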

\noindent The proof of the theorem can be found in Appendix \ref{Theorems}. It is based on the proof of Theorem 1 by \citet{DeresaNegeraWakgari2020Fpmf}. The fact that the proposed joint regression model is identifiable can be seen as surprising, since this means that we can identify the relationship between $T$ and $C$ while only observing their minimum through the follow-up time $Y$ and the censoring indicator $\Delta$. 

\subsection{Estimation of the model parameters}\label{secestmodel}
We consider estimation when the data consist of an i.i.d. sample $\{Y_i,\Delta_i,W_i,Z_i\}_{i=1,...,n}$. Further, it is assumed that:
\begin{enumerate}[label=(A\arabic*),resume,left=0.25\leftmargin]
	\item   There exists a known function $m:(w,z,\gamma)\in\mathbb{R}^{m+2}\times \mathbb{R}\times \Gamma \mapsto m(w,z,\gamma)$ twice continuously differentiable with respect to $\gamma$ such that the estimator \begin{equation}\label{gammaest}
		\hat{\gamma} \in \argmax_{\gamma \in \Gamma } n^{-1} \displaystyle \sum_{i=1}^{n} m(W_i,Z_i,\gamma),     
	\end{equation}
	is consistent for the true parameter $\gamma^*$. \label{A6}
\end{enumerate}
Using the first-order conditions of program \eqref{gammaest}, we obtain that $n^{-1} \sum_{i=1}^{n} \nabla_\gamma m(W_i,Z_i,\hat{\gamma})=0$. Hence, Assumption \ref{A6} implies that we possess a consistent $Z$-estimator of $\gamma$. The theory on $M$-estimators \citep{newey1994large} allows us to find sufficient conditions for the assumption that $\hat\gamma$ is consistent. Assumption \ref{A6} will hold when (i) the true parameter $\gamma^*$ belongs to the interior of $\Gamma$, which is compact, (ii) $\mathcal{L}(\gamma)=\EX\big[m(W,Z,\gamma)\big]$ is continuous and uniquely maximized at $\gamma^*$ and (iii) $\hat{\mathcal{L}}(\gamma)=n^{-1}\sum_{i=1}^{n} m(W_i,Z_i,\gamma)$ converges uniformly (in $\gamma\in \Gamma$) in probability to $\mathcal{L}(\gamma)$. In the case where $\hat{\mathcal{L}}(\cdot)$ is concave, (i) can be weakened to $\gamma^*$ being an element of the interior of a convex set $\Gamma$, while (iii) is only required to hold pointwise rather than uniformly. Returning again to the examples given in Section \ref{sectioncontfunc}, when $Z$ is a continuous random variable for which \eqref{cont} holds, it is well known that ordinary least squares is an extremum estimation method that consistently estimates $\gamma$ under the assumption that the covariance matrix of $(\tilde{X},\tilde{W})$ is full rank. In this case, we can define $m(W,Z,\gamma) = - (Z - W^\top \gamma)^2$. When $Z$ is a binary random variable for which \eqref{bin} holds and the distribution of $\nu$ is known, maximum likelihood estimation can be used to consistently estimate $\gamma$ under weak regularity conditions that can be found in \citet{AldrichJohnH1991Lpla}. In this case, we can define $m(W,Z,\gamma)= Z\log \mathbb{P}(W^\top{\gamma} > \nu )+{(1-Z)}\log \mathbb{P}(W^\top{\gamma} < \nu )$. After obtaining $\hat{\gamma}$ from \eqref{gammaest}, the parameters from model \eqref{model} can be estimated using maximum likelihood with the estimates given by the second-step estimator:
\begin{equation}\label{thetaest}
	\hat{\theta}=(\hat{\beta}_T,\hat{\alpha}_T,\hat{\lambda}_T,\hat{\beta}_C,\hat{\alpha}_C,\hat{\lambda}_C,\hat{\sigma}_T,\hat{\sigma}_C,\hat{\rho})=\argmax_{\theta \in \Theta} \hat{L}(\hat{\gamma}, \theta),
\end{equation}
with $\Theta$ the parameter space as defined before and
\begin{align*}
	&\hat{L}(\hat{\gamma},\theta) =\frac{1}{n} \displaystyle \sum_{i=1}^{n} \text{ log} f_{Y,\Delta \mid W,Z}(Y_i,\Delta_i \mid W_i,Z_i,\hat{\gamma};\theta)
	\\&=\frac{1}{n}\displaystyle \sum_{i=1}^{n}\Bigg\{{\Delta_i}\Bigg(-\text{log}(\sigma_T)+\text{log}\bigg[1-\Phi \bigg(\frac{b_{C_i} - \rho \frac{\sigma_C}{\sigma_T}b_{T_i}}{\sigma_C(1-\rho^2)^{\sfrac{1}{2}}}\bigg)\bigg]+\text{log}\bigg[\phi\bigg(\frac{b_{T_i}}{\sigma_T}\bigg)\bigg]\Bigg)\\& \quad +(1-\Delta_i)\Bigg(-\text{log}(\sigma_C)+\text{log}\bigg[1-\Phi \bigg(\frac{b_{T_i} - \rho \frac{\sigma_T}{\sigma_C}b_{C_i}}{\sigma_T(1-\rho^2)^{\sfrac{1}{2}}}\bigg)\bigg]+\text{log}\bigg[\phi\bigg(\frac{b_{C_i}}{\sigma_C}\bigg)\bigg]\Bigg)\Bigg\},
\end{align*}
with 
$$b_{C_i}= Y_i  -  X_i^\top \beta_C -Z_i \alpha_C-g_{\hat{\gamma}}(W_i,Z_i)\lambda_C,$$
$$b_{T_i}= Y_i -  X_i^\top\beta_T-Z_i\alpha_T-g_{\hat{\gamma}}(W_i,Z_i)\lambda_T.$$

\subsection{Consistency and asymptotic normality}\label{secconsenasnorm}
In this section, it will be shown that the parameter estimates $\hat{\theta}$, as defined in \eqref{thetaest}, are consistent and asymptotically normal. Theorems \ref{thrmcons} and \ref{thrmasn} show consistency and asymptotic normality respectively. The proofs can be found in Appendix \ref{Theorems}. We start by providing some definitions and assumptions that will be useful in stating these theorems. Let
\begin{alignat*}{2}
	&h_\ell(S,\gamma^{*},\theta^{*}) = \nabla_\theta \ell(S, \gamma^{*}, \theta^{*}),  && \quad H_\theta = \EX\big[\nabla_\theta h_\ell(S, \gamma^{*},\theta^{*})\big],  \\ &  h_m(W,Z,\gamma^*) = \nabla_\gamma m(W,Z,\gamma^{*}), && \quad  H_\gamma = \EX\big[\nabla_\gamma h_\ell(S,\gamma^{*},\theta^{*})\big], \\ &   M = \EX\big[\nabla_\gamma h_m(W,Z,\gamma^{*})\big], && \quad  \Psi = -M^{-1}h_m(W,Z,\gamma^{*}), \\ & \Tilde{h}(S,\gamma^*,\theta^*) = \big(h_m(W,Z,\gamma^*)^\top, h_\ell(S,\gamma^*,\theta^*)^\top  \big)^\top, && \quad  H = \EX \Big[\nabla_{\gamma,\theta}\Tilde{h}(S,\gamma^*,\theta^*) \Big].
\end{alignat*}
The following assumptions will be used in the proofs of Theorems \ref{thrmcons} and \ref{thrmasn}:
\begin{enumerate}[label=(A\arabic*),resume,left=0.25\leftmargin]
	\item The parameter space $\Theta$ is compact and $\theta^* $ belongs to the interior of $\Theta$. \label{A7}
	\item There exists a function $\mathcal{D}(s)$ integrable with respect to $G$ and a compact neighborhood $\mathcal{N}_{\gamma} \subseteq \Gamma$ of $\gamma^*$ such that $\lvert\ell(s, \gamma, \theta)\rvert \leq \mathcal{D}(s)$ for all $\gamma\in\mathcal{N}_\gamma$ and $ \theta \in \Theta$. \label{A8}
	\item {$\EX\Big[\lVert\Tilde{h}(S,\gamma^*,\theta^*)\rVert^2\Big] < \infty$ and $\EX\Bigg[\sup\limits_{(\gamma,\theta) \in \mathcal{N}_{\gamma,\theta}}\lVert \nabla_{\gamma,\theta}  \Tilde{h}(S,\gamma,\theta)\rVert\Bigg] < \infty, \text{ with } \mathcal{N}_{\gamma,\theta}$ a neighborhood of $(\gamma^*,\theta^*)$ in $\Gamma\times \Theta$.}\label{A9}
	\item $H^\top H$ is nonsingular.\label{A10}
\end{enumerate}
Note that $\lVert \cdot \rVert$ represents the Euclidean norm. Assumption \ref{A8} is necessary to show the consistency and asymptotic normality of the parameter estimates. Sufficient conditions for this assumption are that the support of $S$ is bounded, $\Gamma$ being compact and Assumption \ref{A7}. Assumptions \ref{A7}, \ref{A9} and \ref{A10} are regularity conditions that are commonly made in a maximum likelihood context. We have the following consistency theorem.

\begin{theorem}\label{thrmcons}
	Under Assumptions \ref{A1}-\ref{A8}, suppose that $\hat{\gamma}$ and $\hat{\theta}$ are parameter estimates as described in \eqref{gammaest} and \eqref{thetaest} respectively, then
	$$\hat{\gamma}\xrightarrow{\text{ p }}\gamma^* \quad \text{and} \quad \hat{\theta}\xrightarrow{\text{ p }}\theta^*.$$
\end{theorem}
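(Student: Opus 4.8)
The plan is to treat $\hat{\theta}$ as a standard extremum (maximum likelihood) estimator and invoke the classical consistency argument for such estimators, as in \citet{newey1994large}, with the only complication being the first-step plug-in of $\hat{\gamma}$. Consistency of $\hat{\gamma}$ is immediate from Assumption \ref{A6}, so the entire task is to show $\hat{\theta} \xrightarrow{p} \theta^*$. I would assemble three ingredients: (i) $\Theta$ is compact (Assumption \ref{A7}); (ii) the population criterion $\theta \mapsto L(\gamma^*,\theta)$ is continuous and has $\theta^*$ as its \emph{unique} maximizer; and (iii) the uniform convergence
$$\sup_{\theta \in \Theta} \lvert \hat{L}(\hat{\gamma},\theta) - L(\gamma^*,\theta)\rvert \xrightarrow{p} 0.$$
Given these, the usual argument — that the maximizer of a criterion uniformly close to one with a well-separated maximum must converge to that maximizer — yields the conclusion.

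For ingredient (ii) I would use the information inequality for maximum likelihood. Writing
$$L(\gamma^*,\theta) - L(\gamma^*,\theta^*) = \EX\left[\log \frac{f_{Y,\Delta \mid W,Z}(Y,\Delta \mid W,Z,\gamma^*;\theta)}{f_{Y,\Delta \mid W,Z}(Y,\Delta \mid W,Z,\gamma^*;\theta^*)}\right]$$
and conditioning on $(W,Z)$, Jensen's inequality shows the difference is nonpositive, with equality if and only if the two conditional sub-densities coincide for almost every $(w,z)$ and each $\delta\in\{0,1\}$. At that point Theorem \ref{thrmident}, applied with $\gamma$ held fixed at $\gamma^*$ on both sides, forces $\theta = \theta^*$; this is precisely where the identification result does its work, converting equality of the induced likelihoods into equality of parameters, so that the maximizer is unique. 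Continuity of $L(\gamma^*,\cdot)$ follows from the smoothness of the sub-density in $\theta$ (built from $\phi$, $\Phi$, and the bivariate normal CDF) together with dominated convergence, justified by the integrable envelope $\mathcal{D}$ of Assumption \ref{A8}.

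For ingredient (iii), which is where the two-step/generated-regressor structure enters, I would split, on the event $\{\hat{\gamma}\in\mathcal{N}_\gamma\}$ whose probability tends to one by consistency of $\hat{\gamma}$,
$$\sup_{\theta\in\Theta} \lvert \hat{L}(\hat{\gamma},\theta) - L(\gamma^*,\theta)\rvert \leq \sup_{\gamma\in\mathcal{N}_\gamma,\,\theta\in\Theta} \lvert \hat{L}(\gamma,\theta) - L(\gamma,\theta)\rvert + \sup_{\theta\in\Theta} \lvert L(\hat{\gamma},\theta) - L(\gamma^*,\theta)\rvert.$$
The first term is controlled by a uniform law of large numbers over the compact set $\mathcal{N}_\gamma\times\Theta$: Assumption \ref{A8} supplies the integrable dominating function, and $\ell(s,\cdot,\cdot)$ is continuous (using the smoothness of $g_\gamma$ in $\gamma$ and of the Gaussian pieces), so the ULLN of \citet{newey1994large} applies. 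The second term vanishes because $L(\cdot,\theta)$ is continuous in $\gamma$ uniformly in $\theta$ (dominated convergence with the same envelope) and $\hat{\gamma}\xrightarrow{p}\gamma^*$.

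The step I expect to be most delicate is ingredient (iii): making the plug-in of the \emph{random} $\hat{\gamma}$ rigorous, so that one passes from pointwise-in-$\gamma$ convergence to joint uniform control over a neighborhood of $\gamma^*$, and then uses continuity of the population criterion in $\gamma$ to absorb the first-step estimation error. The conceptual core, by contrast, is the short information-inequality step, which relies essentially on Theorem \ref{thrmident} to guarantee that the Kullback–Leibler projection is attained only at $\theta^*$.
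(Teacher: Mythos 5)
Your proposal is correct and matches the paper's proof in all essentials: consistency of $\hat{\gamma}$ directly from Assumption \ref{A6}, unique maximization of $L(\gamma^*,\cdot)$ at $\theta^*$ via the strict information (Jensen) inequality combined with Theorem \ref{thrmident}, uniform convergence of $\hat{L}(\hat{\gamma},\cdot)$ to $L(\gamma^*,\cdot)$ obtained by restricting to the event $\{\hat{\gamma}\in\mathcal{N}_\gamma\}$ whose probability tends to one, and Theorem 2.1 of \citet{newey1994large} to conclude. The only (cosmetic) difference lies in the uniform-convergence step: you split off a separate continuity-in-$\gamma$ term and invoke a standard ULLN over the compact set $\mathcal{N}_\gamma\times\Theta$, whereas the paper proves the combined statement $\sup_{\gamma\in B,\,\theta\in\Theta}\lvert\hat{L}(\gamma,\theta)-L(\gamma^*,\theta)\rvert\xrightarrow{\text{ p }}0$ in one shot with a Tauchen-style finite-subcover argument (its Lemmas \ref{lemma1}--\ref{lemma_uc}), resting on exactly the same envelope, continuity, and compactness conditions.
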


The challenge in proving this theorem comes from the fact that we are using a two-step estimation method, meaning that the results from the first step are used in the second step. To ensure consistency of $\hat{\theta}$, in the proofs, we show uniform convergence (in $\theta \in \Theta$) in probability of the empirical likelihood function $ \hat{L}(\hat{\gamma},\theta) $ in \eqref{thetaest} to the true likelihood of the model at $\gamma^*$. We also have the following asymptotic normality result:

\begin{theorem}\label{thrmasn}
	Under Assumptions \ref{A1}-\ref{A10}, suppose that $\hat{\theta}$ is a parameter estimate as described in \eqref{thetaest}, then
	$$
	\sqrt{n}(\hat{\theta}-\theta^{*}) \xrightarrow{\text{ d }} N(0,\Sigma_\theta), \quad$$ with $$\quad \Sigma_\theta = H^{-1}_\theta \EX\big[\{h_\ell(S,\gamma^{*}, \theta^{*}) + H_\gamma \Psi\}\{h_\ell(S,\gamma^{*}, \theta^{*}) + H_\gamma \Psi\}^\top\big]\big({H^{-1}_\theta}\big)^\top.
	$$
\end{theorem}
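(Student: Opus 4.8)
The plan is to treat the two steps jointly as a single stacked $Z$-estimator and then read off the marginal limit for the $\theta$-block. By the first-order conditions of programs \eqref{gammaest} and \eqref{thetaest}, the pair $(\hat{\gamma},\hat{\theta})$ solves $n^{-1}\sum_{i=1}^n \Tilde{h}(S_i,\hat{\gamma},\hat{\theta})=0$. First I would check that the moment function is centered at the truth: $\EX[h_m(W,Z,\gamma^*)]=0$ is the population first-order condition defining $\gamma^*$ (Assumption \ref{A6} and the uniqueness of the maximizer of $\mathcal{L}$), while $\EX[h_\ell(S,\gamma^*,\theta^*)]=0$ holds because $\theta^*$ maximizes $L(\gamma^*,\theta)$ with $\theta^*$ interior (Assumption \ref{A7}); hence $\EX[\Tilde{h}(S,\gamma^*,\theta^*)]=0$.

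Next I would perform a componentwise mean-value expansion of the sample moments around $(\gamma^*,\theta^*)$,
$$
0 = n^{-1}\sum_{i=1}^n \Tilde{h}(S_i,\gamma^*,\theta^*) + \Big[n^{-1}\sum_{i=1}^n \nabla_{\gamma,\theta}\Tilde{h}(S_i,\bar{\gamma},\bar{\theta})\Big]\begin{pmatrix}\hat{\gamma}-\gamma^*\\ \hat{\theta}-\theta^*\end{pmatrix},
$$
with $(\bar{\gamma},\bar{\theta})$ on the segment joining $(\hat{\gamma},\hat{\theta})$ to $(\gamma^*,\theta^*)$. Consistency from Theorem \ref{thrmcons} gives $(\bar{\gamma},\bar{\theta})\xrightarrow{\text{ p }}(\gamma^*,\theta^*)$, and the integrable envelope on the Jacobian in Assumption \ref{A9} yields a uniform law of large numbers, so the bracketed average converges in probability to $H=\EX[\nabla_{\gamma,\theta}\Tilde{h}(S,\gamma^*,\theta^*)]$, nonsingular by Assumption \ref{A10}. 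Premultiplying by $\sqrt{n}$ and rearranging produces
$$
\sqrt{n}\begin{pmatrix}\hat{\gamma}-\gamma^*\\ \hat{\theta}-\theta^*\end{pmatrix} = -H^{-1}\,\frac{1}{\sqrt{n}}\sum_{i=1}^n \Tilde{h}(S_i,\gamma^*,\theta^*) + o_p(1).
$$

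The structural observation that drives the result is that $h_m=\nabla_\gamma m$ is free of $\theta$, so $\nabla_\theta h_m\equiv 0$ and $H$ is block lower triangular, $H=\begin{pmatrix} M & 0\\ H_\gamma & H_\theta\end{pmatrix}$, with inverse $H^{-1}=\begin{pmatrix} M^{-1} & 0\\ -H_\theta^{-1}H_\gamma M^{-1} & H_\theta^{-1}\end{pmatrix}$. Reading off the lower block and using the definition $\Psi=-M^{-1}h_m(W,Z,\gamma^*)$ to absorb the first-step correction gives the asymptotically linear representation
$$
\sqrt{n}(\hat{\theta}-\theta^*) = -H_\theta^{-1}\,\frac{1}{\sqrt{n}}\sum_{i=1}^n \big\{h_\ell(S_i,\gamma^*,\theta^*)+H_\gamma\Psi_i\big\} + o_p(1),
$$
where $\Psi_i=-M^{-1}h_m(W_i,Z_i,\gamma^*)$. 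Since $\EX[h_\ell(S,\gamma^*,\theta^*)]=\EX[\Psi]=0$ and the summands are i.i.d.\ with finite second moment by Assumption \ref{A9}, the Lindeberg–L\'evy central limit theorem together with Slutsky's theorem yields $\sqrt{n}(\hat{\theta}-\theta^*)\xrightarrow{\text{ d }} N(0,\Sigma_\theta)$ with $\Sigma_\theta = H_\theta^{-1}\EX[\{h_\ell+H_\gamma\Psi\}\{h_\ell+H_\gamma\Psi\}^\top](H_\theta^{-1})^\top$, exactly the stated sandwich form.

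The hard part will be the rigorous justification of the linearization rather than the algebra: because the mean-value theorem is applied row by row, the intermediate point differs across coordinates, so I must argue — via the integrable envelope of Assumption \ref{A9} combined with the consistency of Theorem \ref{thrmcons} — that every such Jacobian evaluation still converges to the common limit $H$, and that the remainders are genuinely $o_p(1)$ after multiplication by $\sqrt{n}$. Establishing this uniform control is the delicate step; once the block-triangular inversion is in place, the identification of $\Sigma_\theta$ and the final application of the central limit theorem are routine.
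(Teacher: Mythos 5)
Your proposal is correct and takes essentially the same route as the paper: both treat $(\hat{\gamma},\hat{\theta})$ as a joint Z-estimator with the stacked moment condition $\EX\big[\Tilde{h}(S,\gamma^*,\theta^*)\big]=0$ and rely on the block lower-triangular structure of $H$ to isolate the first-step correction $H_\gamma\Psi$. The only difference is one of packaging: the paper verifies consistency, smoothness of $\Tilde{h}$, and the zero-mean condition and then invokes Theorem 6.1 of Newey and McFadden (1994), whereas you explicitly carry out the mean-value expansion, the uniform convergence of the Jacobian, and the block inversion that constitute the proof of that cited theorem.
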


The difficulty in proving this theorem is related to the fact that the randomness coming from the first step inflates the asymptotic variance of $\hat{\theta}$. Hence, ignoring the first step would lead to inconsistent standard errors and confidence intervals that are not asymptotically valid. To obtain correct standard errors, we treat the two steps as a joint generalized method of moments (GMM) estimator with their moment conditions stacked in one vector \citep{newey1994large}. Indeed, given that $\hat{\gamma}$ and $\hat{\theta}$ are consistent by Theorem \ref{thrmcons}, they are the unique solutions to the first order conditions of their respective objective functions in a neighborhood of $\gamma^*$ and $\theta^*$ (with probability going to $1$). Therefore, the two-step estimator is asymptotically equivalent to the GMM estimator corresponding to the following moments: 
$$\EX[h_m(W,Z,\gamma)]=0 \text{ and } \EX[h_\ell(S, \gamma,\theta)]=0,$$ for the first and second step respectively. As a last remark, if we were to remove the correction $H_\gamma \Psi$ for the first step, the covariance matrix simplifies to the inverse of Fisher's information matrix (assuming the model is correctly specified).

\subsection{Estimation of the asymptotic variance}\label{secestasvar}
Using the result from Theorem \ref{thrmasn}, we can construct a consistent estimator $\hat{\Sigma}_\theta$ for the covariance matrix of the parameters in $\theta$ in the following way:
$$
\hat{\Sigma}_\theta= \hat{H}^{-1}_\theta \Bigg[n^{-1}{\displaystyle \sum_{i=1}^{n}}\{h_\ell(S_i,\hat{\gamma},\hat{\theta}) + \hat{H}_\gamma \hat{\Psi}_i\}\{h_\ell(S_i,\hat{\gamma},\hat{\theta}) + \hat{H}_\gamma \hat{\Psi}_i\}^\top\Bigg]{\big(\hat{H}^{{-1}}_\theta\big)}^\top,
$$
where $S_i=\big(Y_i,\Delta_i,\Tilde{X}_i,\Tilde{W}_i,Z_i\big)$ and
\begin{alignat*}{3}
	& h_\ell(S_i,\hat{\gamma},\hat{\theta}) = \nabla_\theta \ell(S_i, \hat{\gamma},\hat{\theta}), && \quad h_m(W_i,Z_i,\hat{\gamma}) = \nabla_\gamma m(W_i,Z_i,\hat{\gamma}),  && \quad \hat{H}_\theta = n^{-1}{\displaystyle \sum_{i=1}^{n}} \nabla_\theta h_\ell(S_i,\hat{\gamma},\hat{\theta}),  \\ & \hat{H}_\gamma = n^{-1}{\displaystyle \sum_{i=1}^{n}} \nabla_\gamma h_\ell(S_i,\hat{\gamma},\hat{\theta}), && \quad \hat{M} = n^{-1}{\displaystyle \sum_{i=1}^{n}} \nabla_\gamma h_m(W_i,Z_i,\hat{\gamma}), && \quad \hat{\Psi}_i = -\hat{M}^{-1}h_m(W_i,Z_i,\hat{\gamma}).
\end{alignat*}
Thanks to the asymptotic normality and the consistent estimator for the variance of the estimators, confidence intervals can easily be constructed. Note that since $\sigma_T, \sigma_C > 0$ and $\rho \in (-1,1)$, their confidence intervals will be constructed using a logarithm and a Fisher's z-transformation respectively. These transformations project the estimates on the real line, after which the delta method can be used to obtain their standard errors. The confidence intervals can then be constructed and transformed back to the original scale. This procedure makes sure that our confidence intervals are reasonable (e.g. no negative values for the confidence limits of the standard deviation estimates).
Also note that instead of calculating $h_\ell(S_i,\hat{\gamma},\hat{\theta}), \hat{H}_\theta$ and $\hat{H}_\gamma$ using their analytical expressions, they are approximated. This is due to the complexity of these expressions and the amount of them that would have to be derived. For example, $\hat{H}_\theta$ is already a $(2m+9) \times (2m+9)$ matrix of derivatives where $m$ is the dimension of $\Tilde{X}$. The calculation of these approximations is done by making use of Richardson's extrapolation \citep{richardson_1911}, resulting in more accurate estimates. A general description of the method to approximate the Jacobian matrix can be given as repeated calculations of the central difference approximation of the first derivative with respect to each component of $\theta$, using a successively smaller step size. Richardson's extrapolation uses this information to estimate what happens when the step size goes to zero. A similar description can be given for the approximation of the Hessian matrices. Note that these calculations can be quite time consuming depending on the required level of accuracy.

\section{Simulation study}\label{secsim}

In this section, a simulation study is performed to investigate the finite sample performance of the proposed two-step estimator. We consider the four combinations of the cases where $Z$ and $\tilde{W}$ are continuous or binary random variables. It is assumed that when $Z$ is binary, it follows a logit model. The proposed estimator is compared to three other estimators: one which does not account for the confounding issue, one which assumes $T$ and $C$ are independent and one which uses the proposed method but treats $V$ as observed. The parameters are estimated for samples of 250, 500 and 1000 observations. The first step of the data generating process is as follows:
$$
\begin{pmatrix}
	\epsilon_T\\
	\epsilon_C
\end{pmatrix}
\sim N_2\bigg(
\begin{pmatrix}
	0\\
	0
\end{pmatrix},\quad \Sigma = 
\begin{pmatrix}
	{1.1}^2 & 0.75\cdot 1.1 \cdot 1.4\\
	0.75\cdot 1.1 \cdot 1.4 & {1.4}^2
\end{pmatrix}\bigg), \quad \Tilde{X} \sim N(0, 1).
$$
We have 4 different designs depending on whether $Z$ and $\tilde{W}$ are assumed to be a continuous or binary random variable:
\begin{table}[H]
	\centering
	\renewcommand{\arraystretch}{1.25}
	\begin{tabular}{|c|c|c|}
		\hline
		Design 1& $\Tilde{W} \sim U[0, 2], \quad \nu \sim N(0,2)$ & \multirow{ 2}{*}{ $Z = W^{\top}\gamma + \nu,$} \\ \cline{1-2}
		Design 2& $\Tilde{W} \sim \text{Bernoulli}(0.5), \quad \nu \sim N(0,2)$&  \\   \hline
		Design 3& $\Tilde{W} \sim U[0, 2], \quad \nu \sim \text{Logistic}(0,1)$& \multirow{ 2}{*}{ $Z =\mathbbm{1}(W^\top{\gamma} - \nu > 0),$}  \\   \cline{1-2}
		Design 4& $\Tilde{W} \sim \text{Bernoulli}(0.5), \quad \nu \sim \text{Logistic}(0,1)$&  \\   \hline
	\end{tabular}
	\renewcommand{\arraystretch}{1}
	\label{designs1}
\end{table}
\noindent with $W = (1,\tilde{X},\tilde{W})^\top$ and $\gamma = (-1, 0.6, 2.3)^{\top}$. It is also assumed that $$(\epsilon_T,\epsilon_C) \indep (\tilde{X},\tilde{W}, \nu), \qquad \tilde{W} \indep (\tilde{X}, \nu) \quad \text{and} \quad \tilde{X} \indep \nu.$$ 
From this, we can construct:
\begin{table}[H]
	\centering
	\renewcommand{\arraystretch}{1.5}
	\begin{tabular}{|c|c|}
		\hline
		Design 1& \multirow{ 2}{*}{ $V = Z - W^\top{\gamma}.$} \\ [3pt]\cline{1-1}
		Design 2 &   \\ [3pt] \hline
		Design 3 & $ V = (1-Z)\Big[\big(1+\exp\{W^\top{\gamma}\}\big)\log\big(1+\exp\{W^\top{\gamma}\}\big)-W^\top{\gamma}\exp\{W^\top{\gamma}\}\Big]$  \\ [3pt]  \cline{1-1}
		Design 4 & $-Z\Big[\big(1+\exp\{-W^\top{\gamma}\}\big)\log\big(1+\exp\{-W^\top{\gamma}\}\big)+W^\top{\gamma}\exp\{-W^\top{\gamma}\}\Big].$  \\ [3pt]  \hline
	\end{tabular}
	\renewcommand{\arraystretch}{1}
	\label{designs2}
\end{table}
\noindent Finally, $T$ and $C$ can be constructed for each design in the following way:
$$
\left\{
\begin{array}{ll}
	T = \beta_{T,0} + \Tilde{X}\beta_{T,1} + Z \alpha_T + V \lambda_T  + \epsilon_T\\
	C = \beta_{C,0} + \Tilde{X}\beta_{C,1} + Z \alpha_C + V\lambda_C + \epsilon_C
\end{array}
\right.,
$$
where
$(\beta_{T,0}, \beta_{T,1}, \alpha_T, \lambda_T)  = (2.5, 2.6, 1.8, 2) \text{ and } (\beta_{C,0}, \beta_{C,1}, \alpha_C, \lambda_C) =  (2.8, 1.9, 1.5, 1.2).$
It follows that $Y$ = min\{$T,C$\} and $\Delta = \mathbbm{1}(T\leq C)$. 

This data generating process was repeated 2500 times for the four possible designs. The parameter values were chosen such that there is between 45\% and 50\% censoring for each design. For each sample size, there are four different estimators. The first, which we call the naive estimator, ignores the confounding issue and therefore does not include $V$ in the model (no estimates for $\lambda_T$ and $\lambda_C$). The second, which we call the independent estimator, assumes that $T$ and $C$ are independent from each other (no estimates for $\rho$ as it is assumed to be zero). The third, which we call the oracle estimator, uses the control function approach to handle the confounding issue but treats $V$ as if it were observed. The fourth and last, which we call the two-step estimator, uses the two-step estimation method proposed in this article. This means that $V$ is constructed using $\hat{\gamma}$ from the first step. The estimation is performed in \textit{R} and uses the package \textit{nloptr} to maximize certain functions and the package \textit{numDeriv} for computing the necessary Hessian and Jacobian matrices. The package \textit{MASS} is used to generate the bivariate normal variables.

For each estimator, the bias of each parameter estimate is given together with the empirical standard deviation (ESD) and the root mean squared error (RMSE). Note that, as the bias decreases, these last 2 statistics should converge to the same value. To better explain how these statistics are calculated, we give the formulas for $\alpha_T$ as an example. Let $N$ represent the total amount of simulations with $j=1,...,N$ and $(\hat{\alpha}_T)_j$ the estimate of $\alpha_T$ for the $j$'th simulation. The ESD and RMSE for $\alpha_T$ are given as follows:
\begin{align*}
	&\text{ESD} =\sqrt{(N-1)^{-1}\sum_{j=1}^{N}\big[(\hat{\alpha}_T)_j-\Bar{\alpha}_T\big]^2},\quad \text{with } \Bar{\alpha}_T = N^{-1}\sum_{j=1}^{N}(\hat{\alpha}_T)_j.
	\\&\text{RMSE} = \sqrt{N^{-1}\sum_{j=1}^{N}\big[(\hat{\alpha}_T)_j-\alpha_T\big]^2}, \quad \text{with } \alpha_T \text{ the real parameter value}.
\end{align*}
Lastly, the coverage rate (CR) shows in which percentage of the simulations the real parameter value is included in the estimated 95\% confidence interval.

\begin{table}[H]
	\centering
	\caption{Estimation results for design 4 with 46\% censoring and 2500 simulations. Given are the bias, the empirical standard deviation (ESD), the root mean squared error (RMSE) and the confidence rate (CR).}
	\resizebox{\textwidth}{!}{
		\begin{tabular}{|c|rrrr|rrrr|rrrr|}
			\hline
			\multicolumn{5}{|c|}{$n=250$} & \multicolumn{4}{c|}{$n=500$} & \multicolumn{4}{c|}{$n=1000$}\\
			\hline
			\multicolumn{13}{|c|}{naive estimator} \\
			\hline
			& Bias & ESD & RMSE & CR & Bias & ESD & RMSE & CR & Bias & ESD & RMSE & CR \\ 
			\hline
			$\beta_{T,0}$ & 2.450 & 0.536  & 2.508 & 0.008 & 2.478 & 0.358 & 2.504 & 0.000 & 2.481 & 0.254  & 2.494 & 0.000 \\  
			$\beta_{T,1}$ & 0.525 & 0.254  & 0.584 & 0.362 & 0.533 & 0.177 & 0.561 & 0.108 & 0.535 & 0.124  & 0.549 & 0.004 \\ 
			$\alpha_T$ & -4.526 & 0.506  & 4.554 & 0.001 & -4.566 & 0.311 & 4.577 & 0.000 & -4.571 & 0.221  & 4.576 & 0.000\\ 
			$\beta_{C,0}$ & 1.456 & 0.256  & 1.479 & 0.009 & 1.474 & 0.152 & 1.481 & 0.000 & 1.472 & 0.108  & 1.476 & 0.000\\ 
			$\beta_{C,1}$ & 0.238 & 0.258  & 0.351 & 0.807 & 0.229 & 0.182 & 0.293 & 0.720 & 0.228 & 0.133  & 0.263 & 0.548 \\ 
			$\alpha_C$ & -2.615 & 0.461  & 2.655 & 0.005 & -2.627 & 0.322 & 2.646 & 0.000 & -2.618 & 0.231  & 2.628 & 0.000 \\ 
			$\sigma_T$ & 0.582 & 0.114  & 0.593 & 0.001 & 0.581 & 0.079 & 0.587 & 0.000 & 0.581 & 0.057 & 0.584 & 0.000 \\ 
			$\sigma_C$ & 0.221 & 0.105  & 0.245 & 0.320 & 0.223 & 0.067 & 0.233 & 0.039 & 0.223 & 0.048  & 0.228 & 0.001 \\ 
			$\rho$ & -0.057 & 0.263  & 0.269 & 0.930 & -0.042 & 0.184 & 0.189 & 0.936 & -0.031 & 0.129 & 0.132 & 0.928 \\ 
			\hline
			\multicolumn{13}{|c|}{independent estimator} \\
			\hline
			$\beta_{T,0}$ & 0.454 & 0.491 & 0.668 & 0.761 & 0.450 & 0.352 & 0.572 & 0.674 & 0.465 & 0.245  & 0.526 & 0.480  \\ 
			$\beta_{T,1}$ & 0.229 & 0.221  & 0.318 & 0.781 & 0.224 & 0.154 & 0.272 & 0.663 & 0.225 & 0.111  & 0.250 & 0.450 \\
			$\alpha_T$ & 0.036 & 0.838  & 0.839 & 0.939 & 0.034 & 0.600 & 0.601 & 0.945 & 0.009 & 0.410  & 0.410 & 0.947 \\ 
			$\lambda_T$ & 0.223 & 0.326  & 0.395 & 0.967 & 0.221 & 0.233 & 0.321 & 0.904 & 0.213 & 0.161  & 0.267 & 0.782\\
			$\beta_{C,0}$ & 0.578 & 0.369  & 0.686 & 0.593 & 0.567 & 0.260 & 0.624 & 0.385 & 0.576 & 0.179  & 0.603 & 0.124\\ 
			$\beta_{C,1}$ & -0.291 & 0.174  & 0.339 & 0.610 & -0.289 & 0.122  & 0.313 & 0.317 & -0.285 & 0.085  & 0.297 & 0.072 \\ 
			$\alpha_C$ & 0.020 & 0.616  & 0.616 & 0.954 & 0.029 & 0.437 & 0.438 & 0.952 & 0.015 & 0.306 & 0.306 & 0.951\\ 
			$\lambda_C$ & -0.251 & 0.243  & 0.349 & 0.756 & -0.242 & 0.171 & 0.296 & 0.654 & -0.248 & 0.118 & 0.275 & 0.439\\ 
			$\sigma_T$ & 0.048 & 0.070  & 0.085 & 0.888 & 0.053 & 0.049 & 0.072 & 0.801 & 0.056 & 0.036 & 0.067 & 0.618 \\ 
			$\sigma_C$ & 0.191 & 0.107 & 0.219 & 0.515  & 0.202 & 0.073 & 0.214 & 0.165 & 0.207 & 0.053 & 0.214 & 0.011 \\  
			\hline
			\multicolumn{13}{|c|}{oracle estimator} \\
			\hline
			$\beta_{T,0}$ & -0.001 & 0.223 & 0.223 & 0.952 & -0.001 & 0.154  & 0.154 & 0.952 & 0.003 & 0.109  & 0.109 & 0.949 \\ 
			$\beta_{T,1}$ & -0.003 & 0.115  & 0.115 & 0.951 & -0.001 & 0.082  & 0.082 & 0.947 & -0.000 & 0.056  & 0.056 & 0.955 \\ 
			$\alpha_T$ & -0.007 & 0.322 & 0.322 & 0.942 & -0.003 & 0.215  & 0.215 & 0.957 & -0.004 & 0.158  & 0.158 & 0.945\\  
			$\lambda_T$ & -0.003 & 0.151  & 0.151 & 0.943 & -0.002 & 0.101  & 0.101 & 0.960 & -0.001 & 0.074  & 0.074 & 0.944 \\ 
			$\beta_{C,0}$ & -0.004 & 0.298  & 0.298 & 0.940 & -0.004 & 0.209  & 0.209 & 0.948 & 0.005 & 0.151  & 0.151 & 0.952\\ 
			$\beta_{C,1}$ & 0.006 & 0.151 & 0.151 & 0.937 & -0.000 & 0.104  & 0.104 & 0.944 & 0.000 & 0.074  & 0.074 & 0.944\\ 
			$\alpha_C$ & -0.003 & 0.405  & 0.405 & 0.948 & 0.003 & 0.282  & 0.282 & 0.954 & -0.007 & 0.204  & 0.204 & 0.946\\ 
			$\lambda_C$ & 0.002 & 0.186 & 0.186 & 0.942 & 0.003 & 0.132  & 0.132 & 0.946 & -0.004 & 0.094  & 0.094 & 0.946\\ 
			$\sigma_T$ & -0.007 & 0.058 & 0.059 & 0.945 & -0.004 & 0.041  & 0.041 & 0.950 & -0.002 & 0.030  & 0.030 & 0.943 \\ 
			$\sigma_C$ & -0.018 & 0.092  & 0.094 & 0.933 & -0.009 & 0.064  & 0.065 & 0.951 & -0.004 & 0.046  & 0.046 & 0.944\\ 
			$\rho$ & -0.009 & 0.142 & 0.143 & 0.970 & -0.006 & 0.101  & 0.101 & 0.953 & -0.004 & 0.069 & 0.069 & 0.955 \\ 
			\hline
			\multicolumn{13}{|c|}{two-step estimator} \\
			\hline
			$\beta_{T,0}$ & -0.043 & 0.456  & 0.458 & 0.954 & -0.023 & 0.311  & 0.312 & 0.956 & -0.007 & 0.214  & 0.214 & 0.952\\ 
			$\beta_{T,1}$ & -0.021 & 0.227 & 0.228 & 0.952 & -0.009 & 0.157  & 0.157 & 0.952 & -0.005 & 0.109  & 0.109 & 0.952 \\ 
			$\alpha_T$ & 0.061 & 0.782  & 0.784 & 0.944 & 0.036 & 0.525  & 0.526 & 0.952 & 0.012 & 0.365  & 0.365 & 0.951 \\  
			$\lambda_T$ & 0.018 & 0.317 & 0.318 & 0.942 & 0.011 & 0.213  & 0.213 & 0.952 & 0.003 & 0.151  & 0.151 & 0.941\\ 
			$\beta_{C,0}$ & -0.023 & 0.375  & 0.375 & 0.958 & -0.015 & 0.261  & 0.261 & 0.961 & 0.000 & 0.184 & 0.184 & 0.951 \\ 
			$\beta_{C,1}$ & -0.004 & 0.184  & 0.184 & 0.942 & -0.004 & 0.126  & 0.126 & 0.948 & -0.002 & 0.088  & 0.088 & 0.953\\
			$\alpha_C$ & 0.032 & 0.595 & 0.596 & 0.947 & 0.025 & 0.403 & 0.403 & 0.954 & 0.002 & 0.285 & 0.285 & 0.952\\ 
			$\lambda_C$ & 0.012 & 0.250 & 0.250 & 0.956 & 0.010 & 0.173 & 0.174 & 0.957 & -0.001 & 0.123 & 0.123 & 0.952 \\ 
			$\sigma_T$ & -0.004 & 0.058 & 0.059 & 0.946 & -0.003 & 0.041 & 0.041 & 0.944 & -0.001 & 0.030  & 0.030 & 0.940\\ 
			$\sigma_C$ & -0.018 & 0.093 & 0.095 & 0.932 & -0.009 & 0.065 & 0.065 & 0.945  & -0.004 & 0.047 & 0.047 & 0.946 \\
			$\rho$ & -0.009 & 0.147 & 0.147 & 0.970 & -0.006 & 0.104 & 0.104 & 0.955 & -0.004 & 0.072 & 0.072 & 0.954 \\ 
			\hline
		\end{tabular}
	}
	\label{table22}
\end{table}

Table \ref{table22} shows the results for design 4, meaning that both $Z$ and $W$ are binary. The results show a very noticeable bias for the naive estimator for almost each parameter estimate. Note that this bias remains the same as the sample size increases. The table also shows that the estimated standard errors are not asymptotically valid as the CR is inconsistent and does not converge to the expected 95\%. We find the same results when looking at the independent estimator but to a lesser extent. The bias is lower compared to the naive estimator but is still noticeable. A difference with the naive estimator is that there is a lot less bias for $\alpha_T$ and $\alpha_C$. This is to be expected as the independent estimator does take the confounding issue into account. However, the estimated standard errors for $\alpha_T$ and $\alpha_C$ are not asymptotically valid as the CR is inconsistent. It may seem that the CR converge to 95\%, but Tables 4 and 6 show that this is not always the case. As with the naive estimator, the bias does not necessarily decrease when the sample size increases. The bias for the proposed two-step estimation method is very close to 0 and is clearly an improvement over the naive and independent estimator. It is also very close to  that of the oracle estimator, which treats $V$ as observed. The bias decreases when the sample size increases and the ESD and RMSE converge to the same value, which also decreases as the sample size increases. The CR is around 95\%, meaning that we have asymptotically valid standard errors and confidence intervals. From this, it is clear that the two-step estimator performs well, even for small sample sizes. The results for the other designs are similar and can be found in Appendix \ref{Tables}.

\section{Data application}\label{secdataappl}

In this section, we apply the outlined methodology to estimate the effect of Job Training Partnership Act (JTPA) services on time until employment. The data come from a large-scale randomized experiment known as the National JTPA Study and have been analyzed extensively by \citet{bloom1997benefits}, \citet{abadie2002instrumental} and \citet{frandsen2015treatment} among others. The data and problem investigated is the same as in \citet{frandsen2015treatment}, but the method used nevertheless differs as we allow for dependent censoring. Later in this section, we give our reasoning as to why there could be dependent censoring present in the data

This study was performed to evaluate the effectiveness of more than 600 federally funded services, established by the Job Training Partnership Act of 1982, that were intended to increase the employability of eligible adults and out-of-school youths. These services included classroom training, on-the-job training and job search assistance. The JTPA started to fund these programs in October of 1983 and continued funding up until the late 1990's. Between 1987 and 1989, a little over 20,000 adults and out-of-school youths who applied for JTPA were randomly assigned to be in either a treatment or a control group. Treatment group members were eligible to receive JTPA services, while control group members were not eligible for 18 months. However, due to local program staff not always following the randomization rules closely, about 3\% of the control group members were able to participate in JTPA services. It is important to note that we are not comparing JTPA services to no services but rather JTPA services versus no and other services, since control group members were still eligible for non-JTPA training. Between 12 and 36 months after randomization, with an average of 21 months, the participants were surveyed by data collection officers. Next, a subset of 5,468 subjects participated in a second follow-up survey, which focused on the period between the two surveys. The second survey took place between 23 and 48 months after randomization. See Figure \ref{fig1} for a graphical representation of the interview process.
\newpage
\begin{figure}[H]
	\centering
	\scalebox{1}{
	\begin{tikzpicture}[node distance=2cm]
		\node (dec1) [decision, align=center] {individual $i$ has \\ interview 1 at time $I_{1,i}$};
		\node (pro1) [process, right of=dec1, yshift=3cm, align=center] {invited to \\ 2nd interview};
		\node (pro2) [process, right of=dec1, yshift=-3cm, align=center] {not invited to \\ 2nd interview};
		\node (pro3) [observation, right of=pro2, xshift=3cm] {$T < I_{1,i}$ or $C = I_{1,i}$};
		\node (dec2) [decision, right of=pro1, xshift=3cm, align=center] {individual $i$ has \\ interview 2 at time $I_{2,i}$};
		\node (pro4) [process, below of=dec2, yshift=-1cm, align=center] {no response to \\ invitation};
		\node (obs1) [observation, right of=dec2, xshift=3cm] {$ T < I_{2,i}$ or $C = I_{2,i}$};
		\draw [arrow] (dec1) -- (pro1);
		\draw [arrow] (dec1) -- (pro2);
		\draw [arrow] (pro2) -- (pro3);
		\draw [arrow] (pro1) -- (pro4);
		\draw [arrow] (pro4) -- (pro3);
		\draw [arrow] (pro1) -- (dec2);
		\draw [arrow] (dec2) -- (obs1);
	\end{tikzpicture}}
	\captionof{figure}{JTPA interview process for individual $i$.}
	\label{fig1}
\end{figure}
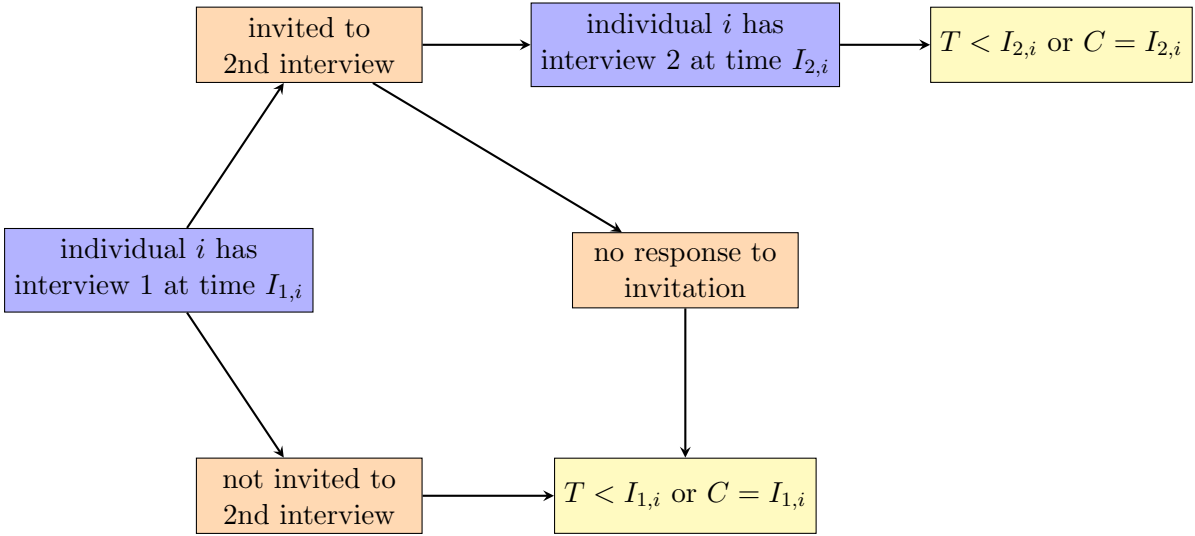 
In this application, we will focus our attention on the effect of JTPA programs on the sample of 1,298 fathers who reported having no job at the time of randomization, for which participation data is available. The outcome of interest is the time between randomization and employment. For the individuals that were only invited to the first interview, the outcome is measured completely if an individual is employed by the time of the survey and censored at the time of the interview otherwise. For the fathers that were invited to the second follow-up interview and participated, the outcome is measured completely if an individual is employed by the time of the second follow-up interview, but is otherwise censored at the second interview date. If the individual does not participate in the second survey after being invited, they will be censored at the time of the first interview. It follows that there could be some dependence between $T$ and $C$ when this decision to go to the second follow-up interview is influenced by them having found a job between the two interview dates. This possible dependence combined with the fact that the data suffer from two-sided noncompliance makes it an appropriate application of the proposed methodology.

The instrument $\tilde{W}$ will be a binary variable indicating whether an individual is in the control or treatment group (0 and 1 respectively). The confounded variable $Z$ indicates whether they actually participated in a JTPA program (0 for no participation and 1 otherwise). This participation variable is confounded due to individuals moving themselves between the treatment and control group in a non-random way. The covariates include the participant's age, race (white or non-white), marital status and whether they have a high school diploma or GED. We expect $\tilde{W}$ to be a valid instrument because it is randomly assigned, correlated with JTPA participation and should have no impact on time until employment other than through participation in a JTPA funded program. Rows 2 through 5 from Table \ref{sumstat} show that the individual characteristics are balanced across the control and treatment group. This indicates satisfactory random assignment. The first row shows that about 31\% of the total sample was assigned to the control group. 

\newpage
\vfill
\begin{figure}[ht]
	\begin{minipage}{0.47\textwidth}
		\begin{figure}[H]
			\centering
			\includegraphics[width=\linewidth]{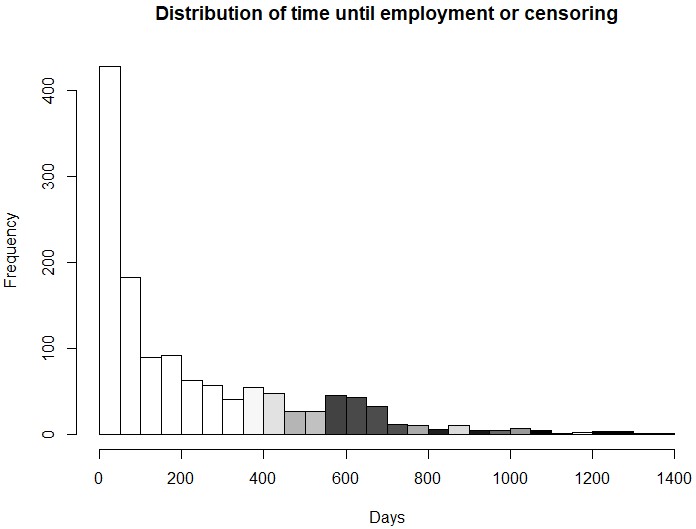}
			\captionof{figure}{}
			\label{distY}
		\end{figure} 
	\end{minipage}
	\hfil
	\begin{minipage}{0.52\textwidth}
		\begin{table}[H]
			\centering
			\scalebox{0.93}{
				\begin{tabular}{|c|c|c|c|}
					\hline
					& total & control & treatment \\
					\hline
					sample size & 1298  &402  & 896 \\
					\hline
					age & 32.7 years  & 33.3 years  & 32.5 years \\
					white & 0.66  & 0.64  & 0.67 \\
					married & 0.77  & 0.76  & 0.77 \\
					GED & 0.47  & 0.48  & 0.47 \\
					\hline
					$Z$ & 0.52  & 0.13  & 0.70 \\
					$(Y,\Delta = 1)$ & 160.9 days  & 181.6 days  & 151.9 days \\
					$\Delta$ & 0.87 & 0.84  & 0.88 \\
					\hline
				\end{tabular}
			}
			\captionof{table}{}
			\label{sumstat}
		\end{table}
	\end{minipage}
	\caption*{Figure 2: The histogram of $Y$, in days, starting from random assignment. The sample includes fathers unemployed at the time of random assignment. The darker the shade of the bin, the higher the censoring rate. 
		
		\smallskip Table 2: Summary statistics for the sample of fathers unemployed at the time of random assignment. The means are shown for the total sample, the control group and the treatment group.}
\end{figure}
\vfill
\begin{table}[H]
	\centering
	\resizebox{\textwidth}{!}{
		\begin{tabular}{|c|rrr|rrr|rrr|}
			\hline
			& \multicolumn{3}{c|}{naive estimator} & \multicolumn{3}{c|}{independent estimator} & \multicolumn{3}{c|}{two-step estimator}  \\
			\hline
			$T$ & Estimate & SE &  $p$-value &  Estimate & SE &  $p$-value &  Estimate & SE &  $p$-value  \\
			\hline
			Intercept & 4.753 & 0.223  & 0.000 &  4.949 & 0.318  & 0.000 &  4.866 & 0.370  & 0.000  \\ 
			Age & 0.015 & 0.006  & 0.007 &  0.013 & 0.006 & 0.025 &  0.015 & 0.008  & 0.056 \\ 
			White & -0.197 & 0.108 & 0.068  & -0.197 & 0.118 & 0.096 & -0.187 & 0.622  & 0.764 \\ 
			Married & -0.331 & 0.123  & 0.007 &  -0.330 & 0.133 & 0.013 &  -0.331 & 0.132 & 0.012 \\ 
			GED & -0.166 & 0.102 & 0.104 &  -0.179 & 0.112 & 0.109 & -0.172 & 0.217 & 0.430 \\
			$\alpha_T$ & -0.218 & 0.102  & 0.033 & -0.483 & 0.260 & 0.063 & -0.428 & 0.210 & 0.041 \\
			$\lambda_T$  &  &  &  & -0.113 & 0.116  & 0.330 & -0.097 & 0.087 & 0.268  \\ 
			\hline
			$C$ & Estimate & SE & $p$-value &  Estimate & SE & $p$-value & Estimate & SE & $p$-value  \\ 
			\hline
			Intercept & 6.866 & 0.134 & 0.000 & 6.672 & 0.164 & 0.000 & 6.848 & 0.403 & 0.000 \\ 
			Age & -0.001 & 0.002 & 0.436 &  -0.001 & 0.002 & 0.605 & -0.001 & 0.003 & 0.605 \\ 
			White & 0.012 & 0.033 & 0.713 & -0.005 & 0.050  & 0.913 & 0.010 & 1.206  & 0.993  \\
			Married & 0.010 & 0.032  & 0.746 & -0.006 & 0.075 & 0.933 & 0.013 & 0.661 & 0.984  \\
			GED & -0.063 & 0.042 & 0.137 &  -0.069 & 0.041 & 0.093 & -0.062 & 0.177 & 0.728  \\  
			$\alpha_C$ & -0.062 & 0.042 & 0.138 & -0.052 & 0.117 & 0.655 & -0.028 & 1.335 & 0.983 \\ 
			$\lambda_C$ &  &  &  & 0.006 & 0.042 & 0.876 & 0.015 & 0.511 & 0.976 \\ 
			\hline
			$\sigma_T$ & 1.817 & 0.040 & 0.000 & 1.804 & 0.038 & 0.000 & 1.816 & 0.038 & 0.000  \\ 
			$\sigma_C$  & 0.323 & 0.037 & 0.000 &  0.285 & 0.013 & 0.000 & 0.323 & 0.029 & 0.000 \\ 
			$\rho$ & -0.430 & 0.196 & 0.028 &   &  &  & -0.432 & 0.176 & 0.014 \\ 
			\hline
		\end{tabular}
	}
	\caption{Estimation results for the naive, independent and two-step estimator. Given are the parameter estimate, standard error (SE) and the $p$-value.}
	\label{tableresults}
\end{table}
\vfill
\newpage

The last 3 rows of table \ref{sumstat} show summary statistics for variables observed after randomization. It is interesting to note that 13\% of the fathers in the control group were nevertheless able to participate in JTPA services compared to 3\% for the entire control group. The mean time to employment also seems to be about 30 days shorter for the individuals assigned to the treatment group compared to the control group. The censoring rate is similar for both groups. Figure \ref{distY} plots a histogram of the observed follow-up time $Y$, where darker shading indicates a higher censoring rate. A lot of the censored observations are around the 600 days mark, at which time most of the first follow-up interviews took place. Since everyone in the sample participated in the first follow-up interview, an observation before the date of the first follow-up survey cannot be censored. 

The results of applying the two-step estimator (using a logit model for $Z$), compared to other estimators, can be found in Table \ref{tableresults}. The naive estimator, which does not treat $Z$ as a confounded variable, seems to underestimate the effect of JTPA services on time until employment compared to the proposed two-step estimator. At a 5\% significance level, both of these estimators find a significant effect of JTPA training reducing time until employment. However, the two-step estimate is almost twice the naive estimate which indicates that the individuals participating in the treatment are those with a lower ability to find employment. The independent estimator, which assumes independent censoring, seems to slightly overestimate the size of the effect compared to the proposed two-step estimator, but is not significant at a 5\% significance level. Age seems to be (borderline) significant across the estimators as does marriage status and having a high school diploma or GED. Being older seems to increase time until employment, while being married and having a diploma reduces it. Both the naive and the two-step estimator seem to agree that there is a quite strong negative correlation of about -0.43 between $T$ and $C$.

\section*{Future research}
It is important to note that this work is only a first step towards a set of models that will allow for the estimation of causal effects under dependent censoring. A first extension could be to select different parametric marginals and copulas by making use of an information criterion. Up until now, the association parameter has been shown to be identified only for certain combinations of parametric copulas and marginals without confounding or covariates (see \cite{CzadoClaudia2021Dcbo}). Implementing this would therefore complicate the identifiability proof and the estimation procedure even further. Another line of research is to allow for semi-parametric marginals. This would greatly increase the flexibility of the model and is currently being studied.

\bibliography{DCCarXiv}

\newpage

\appendix

\appendixpage

\section{Control function examples}\label{CF}
In this first section of the Appendix, we expand on the examples of $g_\gamma(Z,W)$ given in Section \ref{sectioncontfunc}. More specifically, given the distribution of $\nu$, we give explicit expressions for
$$
g_\gamma(Z,W)= Z\EX[\nu   \mid  W^\top{\gamma} > \nu ] + (1-Z)\EX[\nu   \mid   W^\top{\gamma} < \nu ].
$$
If we assume that $\nu $ follows a standard normal distribution, it can be derived that
$$
g_\gamma(Z,W)=  (1-Z)\frac{\phi(W^\top{\gamma})}{\Phi(-W^\top{\gamma})}-Z\frac{\phi(W^\top{\gamma})}{\Phi(W^\top{\gamma})}.
$$
Similarly, if we assume that $\nu $ follows a standard logistic distribution, we find:
\begin{align*}
	g_\gamma(Z,W)&=(1-Z)\Big[\big(1+\exp\{W^\top{\gamma}\}\big)\log\big(1+\exp\{W^\top{\gamma}\}\big)-W^\top{\gamma}\exp\{W^\top{\gamma}\}\Big] \\&-Z\Big[\big(1+\exp\{-W^\top{\gamma}\}\big)\log\big(1+\exp\{-W^\top{\gamma}\}\big)+W^\top{\gamma}\exp\{-W^\top{\gamma}\}\Big].
\end{align*}
Both of these expressions follow from the following conditional expectations. 
Let $\nu \sim N(0,1)$, it follows that:
\begin{align*}
	\EX[\nu  \mid  \nu <a] = \frac{1}{\Phi(a)}\int^a_{-\infty} \nu \phi(\nu )\diff \nu= \frac{-1}{\Phi(a)}\int^a_{-\infty} \phi'(\nu)\diff \nu  = \frac{-\phi(a)}{\Phi(a)},
\end{align*}
\begin{align*}
	\EX[\nu \mid  \nu>a] = \frac{1}{1-\Phi(a)}\int^{+\infty}_a \nu\phi(\nu)\diff \nu
	= \frac{-1}{\Phi(-a)}\int^{+\infty}_a\phi'(\nu)\diff \nu
	= \frac{\phi(a)}{\Phi(-a)}.
\end{align*}
If $\nu\sim$ Logistic$(0,1)$, it follows that:
\begin{align*}
	\EX[\nu \mid  \nu<a]=(1+e^{-a})\int^a_{-\infty} \nu\frac{e^{-\nu}}{(1+e^{-\nu})^2}\diff \nu 
	= -(1+e^{-a})\log(1+e^{-a})-ae^{-a},
\end{align*}
\begin{align*}
	\EX[\nu \mid  \nu>a] = (1+e^{a})\int^{+\infty}_a \nu\frac{e^{-\nu}}{(1+e^{-\nu})^2}\diff \nu  = (1+e^a)\log(1+e^{a})-ae^a.
\end{align*}

\section{Technical lemmas}\label{lemmas}
In this section, we prove three lemmas of which Lemma \ref{lemma_uc} is needed to prove Theorem \ref{thrmcons} in Appendix \ref{Theorems}. Lemma \ref{lemma1} is used in the proof of Lemma \ref{lemma_jad}, and Lemma \ref{lemma_jad} is used in the proof of Lemma \ref{lemma_uc}. Note that the proofs of the first two lemmas are inspired by the proof of Lemma 1 by \citet{TauchenGeorge1985Dtae}. For all of these proofs, it is useful to define the open cube $I(\gamma,\theta,d)$ as
$$
I(\gamma,\theta,d) = \Bigg\{(\Tilde{\gamma},\Tilde{\theta})\in \mathcal{N}_\gamma\times \Theta: \lvert\lvert(\Tilde{\gamma},\Tilde{\theta}) - (\gamma,\theta) \rvert\rvert_\infty < d\Bigg\},
$$
where $\lvert\lvert\cdot\rvert\rvert_\infty$ is the sup-norm and $d \in \mathbb{R}_{>0}$.

\begin{lemma}\label{lemma1}
	If Assumptions \ref{A1}-\ref{A8} hold, then for all $\varepsilon>0$ and $(\gamma,\theta) \in \mathcal{N}_\gamma \times \Theta$, there exists a $d>0$ such that
	$$
	\EX\Bigg[\sup_{(\Tilde{\gamma},\Tilde{\theta}) \in I(\gamma,\theta,d)} \lvert\ell(S,\Tilde{\gamma},\Tilde{\theta})-\ell(S,\gamma,\theta)\rvert\Bigg] < \frac{\varepsilon}{4}.
	$$
\end{lemma}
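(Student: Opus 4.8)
The plan is to exploit the closed form of the conditional density from Section \ref{secdistr}, combined with the integrable envelope supplied by Assumption \ref{A8}, and to conclude by dominated convergence. First I would establish that for $G$-almost every $s=(y,\delta,\tilde x,\tilde w,z)$, the map $(\gamma,\theta)\mapsto \ell(s,\gamma,\theta)$ is continuous on $\mathcal N_\gamma\times\Theta$. This follows by inspecting the explicit expressions for $f_{Y,\Delta\mid W,Z}(y,\delta\mid w,z,\gamma;\theta)$: the quantities $b_T$ and $b_C$ depend continuously on $(\gamma,\theta)$ because $g_\gamma(z,w)$ is twice continuously differentiable in $\gamma$ by Assumption \ref{A6} and affine in the remaining components of $\theta$; the maps $\phi$ and $\Phi$ are continuous; and the constraints $\sigma_T,\sigma_C>0$, $\lvert\rho\rvert<1$ built into $\Theta$ keep every denominator bounded away from $0$. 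Moreover the density is strictly positive, since $\phi>0$ and $1-\Phi(\cdot)>0$ everywhere, so its composition with $\log$ is well defined and continuous.

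Next, fixing $(\gamma,\theta)\in\mathcal N_\gamma\times\Theta$, I would define for $d>0$
$$
\Omega_d(s) = \sup_{(\tilde\gamma,\tilde\theta)\in I(\gamma,\theta,d)} \bigl\lvert\ell(s,\tilde\gamma,\tilde\theta)-\ell(s,\gamma,\theta)\bigr\rvert .
$$
The first thing to check is that $\Omega_d$ is measurable in $s$: because $\ell(s,\cdot,\cdot)$ is continuous and the open cube $I(\gamma,\theta,d)$ is a separable metric space, the supremum is unchanged if restricted to a fixed countable dense subset of $I(\gamma,\theta,d)$, so $\Omega_d$ is a countable supremum of measurable functions and hence measurable. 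The family $\{\Omega_d\}_{d>0}$ is nondecreasing in $d$, and by continuity of $\ell(s,\cdot,\cdot)$ at the point $(\gamma,\theta)$ we have $\Omega_d(s)\downarrow 0$ as $d\downarrow 0$ for $G$-almost every $s$.

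Finally I would apply the dominated convergence theorem. By the triangle inequality and Assumption \ref{A8},
$$
\Omega_d(s)\le \sup_{(\tilde\gamma,\tilde\theta)\in I(\gamma,\theta,d)}\bigl\lvert\ell(s,\tilde\gamma,\tilde\theta)\bigr\rvert + \bigl\lvert\ell(s,\gamma,\theta)\bigr\rvert \le 2\mathcal D(s),
$$
which is integrable with respect to $G$. Since $\Omega_d(s)\to 0$ pointwise as $d\downarrow 0$ and is dominated by the integrable function $2\mathcal D$, dominated convergence yields $\EX[\Omega_d(S)]\to 0$, so there is a $d>0$ with $\EX[\Omega_d(S)]<\varepsilon/4$, which is exactly the claim. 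The genuinely delicate points are the measurability of the supremum, handled by the separability argument above, and the verification that Assumption \ref{A8} furnishes an integrable envelope valid uniformly over the entire cube; the continuity of $\ell$ and the interchange of limit and expectation are then routine.
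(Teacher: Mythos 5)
Your proposal is correct and follows essentially the same route as the paper's proof: bound the supremum by the integrable envelope $2\mathcal{D}(s)$ from Assumption \ref{A8}, use continuity of $\ell$ at $(\gamma,\theta)$ to get pointwise convergence to zero along a sequence $d_n\downarrow 0$, and conclude by dominated convergence. Your additional care with measurability of the supremum (via a countable dense subset) and with the continuity of $\ell$ (positivity of the density, denominators bounded away from zero on $\Theta$) fills in details the paper leaves implicit, but the argument is the same.
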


\begin{proof}
	\noindent Define $$\kappa (s,\gamma,\theta,d)=\sup_{(\Tilde{\gamma},\Tilde{\theta}) \in I(\gamma,\theta,d)} \lvert\ell(s,\Tilde{\gamma},\Tilde{\theta})-\ell(s,\gamma,\theta)\rvert,$$
	with $(s,\gamma,\theta,d) \in \mathcal{G} \times \mathcal{N}_\gamma \times \Theta \times \mathbb{R}_{>0}$. By Assumption \ref{A8}, for all $ (s,\gamma,\theta,d) \in \mathcal{G} \times \mathcal{N}_\gamma \times \Theta \times \mathbb{R}_{>0}:$
	\begin{align*}
		0 \leq \kappa (s,\gamma,\theta,d) & \leq \sup_{\Tilde{\gamma} \in \mathcal{N}_\gamma, \Tilde{\theta} \in \Theta} \lvert\ell(s,\Tilde{\gamma},\Tilde{\theta})-\ell(s,\gamma,\theta)\rvert \leq 2\cdot\mathcal{D}(s).
	\end{align*}
	Let $d_n$ be a sequence that converges to $0$ when $n \to \infty$ and define $\kappa_n (s,\gamma,\theta) = \kappa (s,\gamma,\theta,d_n).$ The continuity of $\ell$, implies that (use Heine-Cantor theorem) for all $(s,\gamma,\theta) \in \mathcal{G} \times \mathcal{N}_\gamma \times \Theta$ it holds that 
	$$
	\lim_{n \to \infty} \kappa_n (s, \gamma,\theta) = 0.
	$$
	Since $\EX\big[\mathcal{D}(s)\big] < \infty$, the dominated convergence theorem states that 
	$$\lim_{n \to \infty}\EX\Bigg[\sup_{(\Tilde{\gamma},\Tilde{\theta}) \in I(\gamma,\theta, d_n)} \lvert\ell(S,\Tilde{\gamma},\Tilde{\theta})-\ell(S,\gamma,\theta)\rvert\Bigg] = 0.$$
	From this we can infer that for all $\varepsilon>0$ and $(\gamma,\theta) \in \mathcal{N}_\gamma \times \Theta$, there exists a $d>0$ such that
	$$
	\EX\Bigg[\sup_{(\Tilde{\gamma},\Tilde{\theta}) \in I(\gamma,\theta,d)} \lvert \ell(S,\Tilde{\gamma},\Tilde{\theta})-\ell(S,\gamma,\theta)\rvert \Bigg] < \frac{\varepsilon}{4}.
	$$
\end{proof}

\begin{lemma}\label{lemma_jad}
	Under Assumptions \ref{A1}-\ref{A8}, there exists a neighborhood $B$ of $\gamma^*$ such that for all $ \varepsilon > 0$ and $\gamma \in B$, we have
	
	\begin{equation}\label{eq_Jad}
		\lim_{n \to \infty} \mathbb{P}\Bigg(\sup_{\gamma\in B,\theta \in \Theta} \big\lvert\hat{L}(\gamma,\theta)-L(\gamma^*,\theta)\big\rvert < \varepsilon \Bigg) = 1.
	\end{equation}
\end{lemma}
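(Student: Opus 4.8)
The plan is to reach the statement by the classical compactness (covering) route to a uniform law of large numbers, in the spirit of \citet{TauchenGeorge1985Dtae}, combined with a continuity argument that absorbs the gap between $L(\gamma,\theta)$ and $L(\gamma^{*},\theta)$. Throughout I write $L(\gamma,\theta)=\EX[\ell(S,\gamma,\theta)]$ and $\hat{L}(\gamma,\theta)=n^{-1}\sum_{i=1}^{n}\ell(S_i,\gamma,\theta)$, fix $\varepsilon>0$, and decompose, for every $(\gamma,\theta)$,
\[
\big|\hat{L}(\gamma,\theta)-L(\gamma^{*},\theta)\big|\le\big|\hat{L}(\gamma,\theta)-L(\gamma,\theta)\big|+\big|L(\gamma,\theta)-L(\gamma^{*},\theta)\big|.
\]
The first summand is the genuinely stochastic part, controlled by a uniform law of large numbers over the compact set $\mathcal{N}_\gamma\times\Theta$ (compact by Assumptions \ref{A7}--\ref{A8}); the second is deterministic and is made small by choosing the neighborhood $B\subseteq\mathcal{N}_\gamma$ of $\gamma^{*}$ suitably. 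Since $B\subseteq\mathcal{N}_\gamma$, any uniform bound over $\mathcal{N}_\gamma\times\Theta$ transfers to $B\times\Theta$.

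For the deterministic term I would first note that Lemma \ref{lemma1}, combined with dominated convergence (the dominating function $\mathcal{D}$ from Assumption \ref{A8}), delivers joint continuity of $(\gamma,\theta)\mapsto L(\gamma,\theta)$, and in particular continuity of $\gamma\mapsto L(\gamma,\theta)$ at $\gamma^{*}$ that is uniform in $\theta\in\Theta$. Hence a neighborhood $B$ of $\gamma^{*}$ can be chosen small enough that $\sup_{\gamma\in B,\,\theta\in\Theta}|L(\gamma,\theta)-L(\gamma^{*},\theta)|<\varepsilon/2$; this is exactly why the statement localizes $\gamma$ to $B$ rather than to all of $\mathcal{N}_\gamma$.

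For the stochastic term I would run the covering argument. By Lemma \ref{lemma1}, for each $(\gamma,\theta)\in\mathcal{N}_\gamma\times\Theta$ there is $d=d(\gamma,\theta)>0$ with $\EX[\kappa(S,\gamma,\theta,d)]<\varepsilon/8$, where $\kappa(s,\gamma,\theta,d)=\sup_{(\tilde\gamma,\tilde\theta)\in I(\gamma,\theta,d)}|\ell(s,\tilde\gamma,\tilde\theta)-\ell(s,\gamma,\theta)|$ (measurable and integrable thanks to the continuity of $\ell$ and Assumption \ref{A8}). The open cubes $I(\gamma,\theta,d(\gamma,\theta))$ cover the compact set $\mathcal{N}_\gamma\times\Theta$, so there is a finite subcover with centers $(\gamma_1,\theta_1),\dots,(\gamma_K,\theta_K)$ and radii $d_1,\dots,d_K$. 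For an arbitrary $(\gamma,\theta)$, choosing a cube $j$ containing it and inserting $(\gamma_j,\theta_j)$ gives
\[
\big|\hat{L}(\gamma,\theta)-L(\gamma,\theta)\big|\le\tfrac{1}{n}\sum_{i=1}^{n}\kappa(S_i,\gamma_j,\theta_j,d_j)+\big|\hat{L}(\gamma_j,\theta_j)-L(\gamma_j,\theta_j)\big|+\EX[\kappa(S,\gamma_j,\theta_j,d_j)].
\]
Taking the supremum over $(\gamma,\theta)$ turns the right-hand side into a maximum over the finitely many indices $j$: the last term is below $\varepsilon/8$ by construction, the middle term converges to $0$ in probability by the ordinary law of large numbers, and the first term converges in probability to $\EX[\kappa(S,\gamma_j,\theta_j,d_j)]<\varepsilon/8$. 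Intersecting these finitely many high-probability events shows $\sup_{(\gamma,\theta)\in\mathcal{N}_\gamma\times\Theta}|\hat{L}(\gamma,\theta)-L(\gamma,\theta)|<\varepsilon/2$ with probability tending to $1$, and the triangle inequality then yields the claim.

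The main obstacle is the stochastic term: making the covering argument rigorous requires that $\kappa$ be measurable and integrable (both guaranteed by the continuity of $\ell$ and Assumption \ref{A8}) and that, after passing to the finite subcover, all the pointwise laws of large numbers and the Lemma \ref{lemma1} bounds hold simultaneously --- which is precisely why the compactness of $\mathcal{N}_\gamma\times\Theta$ and the $\varepsilon/8$ budgeting are needed. The secondary but essential point is that the target is $L(\gamma^{*},\theta)$ rather than $L(\gamma,\theta)$, which is what forces $B$ to be taken small enough for the deterministic continuity bound to beat $\varepsilon/2$.
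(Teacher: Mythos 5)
Your proposal is correct and follows essentially the same route as the paper: both rest on Lemma \ref{lemma1}, a finite subcover of a compact set, and the law of large numbers applied at the finitely many cube centers. The only difference is bookkeeping --- you cover all of $\mathcal{N}_\gamma\times\Theta$ and split off the deterministic gap $\lvert L(\gamma,\theta)-L(\gamma^*,\theta)\rvert$ via uniform continuity, whereas the paper covers only the slice $\{\gamma^*\}\times\Theta$, obtains $B$ as a ball of radius $\min_k d_k$, and absorbs both the stochastic and deterministic errors into comparisons with the centers $(\gamma^*,\theta_k)$.
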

\begin{proof}
	
	Take $\gamma \in \mathcal{N}_\gamma$ (with $\mathcal{N}_\gamma$ defined in Assumption \ref{A8}). By Lemma 1, for all $\theta \in \Theta$, there exists a $d_\theta>0$ such that 
	\begin{equation}\label{lemma1res}
		\EX\Bigg[\sup_{( \Tilde{\gamma},\Tilde{\theta}) \in I(\gamma^*,\theta,d_\theta)} \lvert \ell(S, \Tilde{\gamma},\Tilde{\theta})-\ell(S, \gamma^*,\theta)\rvert \Bigg] < \frac{\varepsilon}{4}.
	\end{equation}
	Given these open cubes, we can define 
	$$
	I = \bigcup_{\theta \in \Theta} I(\gamma^*,\theta,d_\theta),
	$$
	which is clearly an open cover of $\{\gamma^*\} \times \Theta$. 
	
	\newpage \noindent Because $\Theta$ is assumed to be compact and $\gamma^*$ fixed, the open cover definition of a compact space tells us that since $I$ is a collection of open subsets that cover $\{\gamma^*\} \times \Theta$, there must exist a finite subcollection $ \{I_{1},I_{2},...,I_{K}\}$ such that 
	$$
	\{\gamma^*\} \times \Theta \subseteq \bigcup_{k=1\dots, K}I_k,
	$$
	with 
	$$
	I_k=I(\gamma^*,\theta_k,d_k),\ k=1,\dots, K,
	$$ 
	for some $d_k>0$.
	Thanks to the subcollection being finite, we can define $\bar{d} = \min_{k=1,...,K} d_k>0.$ Let $B_\gamma$ be the open ball in $\Gamma$ centered around $\gamma^*$ with a radius of $\Bar{d}$. For all $ k=1,...,K$ and $\gamma \in B_\gamma$, it holds that $(\gamma,\theta_k) \in I_k$, which means that $F$ is also a finite open cover of $B_\gamma \times \Theta$. 
	Let $B =B_\gamma \cap \mathcal{N}_\gamma$, $\gamma \in B$ and $(\gamma, \theta) \in I_{k}$. It follows from \eqref{lemma1res} that
	
	\begin{align*}
		\frac{\varepsilon}{4} & \geq \EX\Big[ \lvert\ell(S, \gamma,\theta)-\ell(S, \gamma^*,\theta_k)\rvert\Big]  \geq \lvert L( \gamma,\theta)-L(\gamma^*,\theta_k)\rvert.
	\end{align*}
	This means that
	\begin{equation}\label{bound1}
		\lvert L(\gamma,\theta)-L(\gamma^*,\theta_k)\rvert \leq \frac{\varepsilon}{4} \quad \text{ when } (\gamma,\theta) \in I_{k}.
	\end{equation}
	We have
	\begin{flalign}
		&\big\lvert\hat{L}(\gamma,\theta)-L(\gamma^*,\theta)\big\rvert \nonumber \\
		&= \bigg\lvert\frac{1}{n}{\displaystyle \sum_{i=1}^{n}\ell(S_i, \gamma,\theta)-\ell(S_i, \gamma^*,\theta_k)+\ell(S_i, \gamma^*,\theta_k)}-L(\gamma^*,\theta_k)+L(\gamma^*,\theta_k)-L(\gamma^*,\theta)\bigg\rvert \nonumber 
		\\& \leq \frac{1}{n}{\displaystyle \sum_{i=1}^{n}\Big\lvert\ell(S_i, \gamma,\theta)-\ell(S_i, \gamma^*,\theta_k)}\Big\rvert   -  \EX\Bigg[\sup_{(\Tilde{\gamma},\Tilde{\theta}) \in I_{k}} \lvert\ell(S,\Tilde{\gamma},\Tilde{\theta})-\ell(S, \gamma^*,\theta_k)\rvert \Bigg] \nonumber \\ & + \EX\Bigg[\sup_{(\Tilde{\gamma},\Tilde{\theta}) \in I_{k}} \lvert \ell(S,\Tilde{\gamma},\Tilde{\theta})-\ell(S, \gamma^*,\theta_k)\rvert \Bigg] +\big\lvert\hat{L}(\gamma^*,\theta_k)-L( \gamma^*,\theta_k)\big\rvert \nonumber +\Big\lvert L(\gamma^*,\theta_k)-L(\gamma^*,\theta)\Big\rvert \nonumber \\& \leq  \frac{1}{n}{\displaystyle \sum_{i=1}^{n}\sup_{(\Tilde{\gamma},\Tilde{\theta}) \in I_{k}}\lvert \ell(S_i,\Tilde{\gamma},\Tilde{\theta})-\ell(S_i, \gamma^*,\theta_k)}\rvert - \EX\Bigg[\sup_{(\Tilde{\gamma},\Tilde{\theta}) \in I_{k}} \lvert \ell(S,\Tilde{\gamma},\Tilde{\theta})-\ell(S, \gamma^*,\theta_k)\rvert\Bigg] \label{part1}  \\& + \EX\Bigg[\sup_{(\Tilde{\gamma},\Tilde{\theta}) \in I_{k}} \lvert \ell(S,\Tilde{\gamma},\Tilde{\theta})-\ell(S, \gamma^*,\theta_k)\rvert \Bigg] +\Big\lvert L( \gamma^*,\theta_k)-L(\gamma^*,\theta)\Big\rvert. \label{part2} \\& +\big\lvert\hat{L}(\gamma^*,\theta_k)-L( \gamma^*,\theta_k)\big\rvert. \label{part3}
	\end{flalign}
	The terms in \eqref{part2} are each bounded by $\frac{\varepsilon}{4}$ due to \eqref{lemma1res} and \eqref{bound1} respectively. For \eqref{part1} and \eqref{part3}, the strong law of large numbers can be applied such that they go to $0$ in probability. Because of this, we have that:
	\begin{equation*}
		\lim_{n \to \infty} \mathbb{P}\Bigg(\sup_{(\gamma,\theta) \in I_k}\bigg\lvert \hat{L}(\gamma,\theta)-L(\gamma^*,\theta)\bigg\rvert < \varepsilon \Bigg)=1,
	\end{equation*}
	for all $k=1,\dots,K$. 
	Since $F$ is a finite open covering of $B\times \Theta$, this leads to \eqref{eq_Jad}.\\
\end{proof}

\begin{lemma}If Assumptions \ref{A1}-\ref{A8} hold, then
	$$
	\sup_{\theta \in \Theta} \big\lvert \hat{L}(\hat{\gamma},\theta)-L(\gamma^*,\theta)\big\rvert \xrightarrow{\text{ p }}0.
	$$\label{lemma_uc}
\end{lemma}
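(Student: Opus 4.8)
The plan is to combine the uniform convergence established in Lemma \ref{lemma_jad} with the consistency of $\hat{\gamma}$ provided by Assumption \ref{A6}. The crucial observation is that Lemma \ref{lemma_jad} delivers convergence that is uniform not only over $\theta \in \Theta$ but also over $\gamma$ ranging in a whole neighborhood $B$ of $\gamma^*$; this extra uniformity in $\gamma$ is precisely what allows us to replace the deterministic $\gamma^*$ by the random, data-dependent $\hat{\gamma}$ in the argument of $\hat{L}$.

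First I would fix $\varepsilon > 0$ and recall from Lemma \ref{lemma_jad} that there is a neighborhood $B$ of $\gamma^*$ with
$$\lim_{n\to\infty}\mathbb{P}\Bigg(\sup_{\gamma\in B,\,\theta\in\Theta}\big\lvert\hat{L}(\gamma,\theta)-L(\gamma^*,\theta)\big\rvert<\varepsilon\Bigg)=1.$$
Since $B$ contains an open neighborhood of $\gamma^*$ and $\hat{\gamma}\xrightarrow{\text{ p }}\gamma^*$ by Assumption \ref{A6}, the event $A_n=\{\hat{\gamma}\in B\}$ satisfies $\mathbb{P}(A_n)\to 1$.

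Next I would decompose along $A_n$. On $A_n$ the data-dependent argument $\hat{\gamma}$ lies in $B$, so $\sup_{\theta\in\Theta}\lvert\hat{L}(\hat{\gamma},\theta)-L(\gamma^*,\theta)\rvert$ is bounded above by the doubly uniform supremum $\sup_{\gamma\in B,\,\theta\in\Theta}\lvert\hat{L}(\gamma,\theta)-L(\gamma^*,\theta)\rvert$. Hence, splitting over $A_n$ and its complement,
$$\mathbb{P}\Bigg(\sup_{\theta\in\Theta}\big\lvert\hat{L}(\hat{\gamma},\theta)-L(\gamma^*,\theta)\big\rvert\geq\varepsilon\Bigg)\leq \mathbb{P}(A_n^c)+\mathbb{P}\Bigg(\sup_{\gamma\in B,\,\theta\in\Theta}\big\lvert\hat{L}(\gamma,\theta)-L(\gamma^*,\theta)\big\rvert\geq\varepsilon\Bigg).$$
The first term vanishes by consistency of $\hat{\gamma}$ and the second by Lemma \ref{lemma_jad}. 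As $\varepsilon>0$ was arbitrary, this yields the claimed convergence in probability.

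There is no genuinely hard step remaining, since the substantive work—controlling the log-likelihood uniformly over both arguments via the compactness and dominated-convergence arguments—was already carried out in Lemmas \ref{lemma1} and \ref{lemma_jad}. The only point requiring care is the event decomposition: the random argument $\hat{\gamma}$ must be handled by conditioning on the high-probability event $\{\hat{\gamma}\in B\}$, rather than by attempting to plug $\hat{\gamma}$ directly into a statement that holds only pointwise in $\gamma$. It is exactly the uniformity over $\gamma\in B$ in Lemma \ref{lemma_jad} that makes this substitution legitimate.
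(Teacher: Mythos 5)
Your proposal is correct and follows essentially the same route as the paper's own proof: both decompose the probability of the bad event over $\{\hat{\gamma}\in B\}$ and its complement, kill the first term using the uniformity over $\gamma\in B$ from Lemma \ref{lemma_jad}, and kill the second using the consistency of $\hat{\gamma}$ from Assumption \ref{A6}. No gaps.
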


\begin{proof}
	Note that for all $\varepsilon > 0$:
	\begin{align}
		\notag & \mathbb{P}\Bigg(\sup_{\theta \in \Theta} \bigg\lvert \hat{L}(\hat{\gamma},\theta)-L( \gamma^*,\theta)\bigg\rvert > \varepsilon \Bigg) \\ \notag & = \mathbb{P}\Bigg(\left\{\sup_{\theta \in \Theta} \bigg\lvert\hat{L}(\hat{\gamma},\theta)-L(\gamma^*,\theta)\bigg\rvert > \varepsilon \right\}\quad \bigcap \quad \Big\{\hat{\gamma} \in B \Big\} \Bigg) \\ &  + \mathbb{P}\Bigg(\left\{\sup_{\theta \in \Theta} \bigg\lvert \hat{L}(\hat{\gamma},\theta)-L(\gamma^*,\theta)\bigg\rvert > \varepsilon \right\}\quad \bigcap \quad \Big\{\hat{\gamma} \notin B \Big\} \Bigg).\label{pigeonhole}
	\end{align}
	Assumption \ref{A6} implies that $\hat{\gamma}\xrightarrow{\text{ p }}\gamma^{*}$. Because of this we know that
	$$
	\lim_{n \to \infty} \mathbb{P}\Bigg(\left\{\sup_{\theta \in \Theta} \bigg\lvert\hat{L}(\hat{\gamma},\theta)-L(\gamma^*,\theta)\bigg\rvert > \varepsilon\right\} \quad \bigcap \quad \Big\{\hat{\gamma} \notin B \Big\} \Bigg) \leq \lim_{n \to \infty} \mathbb{P}\big(\hat{\gamma} \notin B \big) = 0.
	$$
	Moreover, we have
	\begin{align*}
		&\mathbb{P}\Bigg(\left\{\sup_{\theta \in \Theta} \bigg\lvert \hat{L}(\hat{\gamma},\theta)-L(\gamma^*,\theta)\bigg\rvert > \varepsilon \right\}\quad \bigcap \quad \Big\{\hat{\gamma} \in B \Big\} \Bigg)\\
		&\le \mathbb{P}\Bigg(\sup_{(\gamma,\theta) \in B\times \Theta} \bigg\lvert \hat{L}(\gamma,\theta)-L(\gamma^*,\theta)\bigg\rvert > \varepsilon \Bigg)\to 0,
	\end{align*}
	by Lemma \ref{lemma_jad}. 
	We obtain the result using \eqref{pigeonhole}. 
\end{proof}
\section{Proofs of theorems}\label{Theorems}
In this section of the Appendix, the three main theorems regarding identifiability, consistency and asymptotic normality are proven.

\subsection*{Proof of Theorem 1}

Because of Assumption \ref{A5}, we know that $\gamma$ is identified. For ease of notation, let $\theta_1=\theta$ and $\theta_2=\theta^*$. From the model specification, we know that:
\begin{align*}
	& f_{Y_j,\Delta_j \mid W,Z}(y,1 \mid w,z,\gamma ;\theta_j) \\& =  \Bigg[1-\Phi \Bigg(\frac{\Big(1-\rho_j \frac{\sigma_{C_j}}{\sigma_{T_j}}\Big)y-x^\top{\beta_{C_j}} - z\alpha_{C_j} - v\lambda_{C_j} + \rho_j \frac{\sigma_{C_j}}{\sigma_{T_j}}\big(x^\top{\beta_{T_j}} + z\alpha_{T_j} + v\lambda_{T_j}\big)}{\sigma_{C_j}(1-{\rho_j}^2)^{\sfrac{1}{2}}}\Bigg)\Bigg] \\ &\times \frac{1}{\sigma_{T_j}} \phi\bigg(\frac{y-x^\top{\beta_{T_j}} - z\alpha_{T_j} - v\lambda_{T_j}}{\sigma_{T_j}}\bigg), \hspace{0.4cm} j=1,2.
\end{align*}
We will consider a number of cases that are dependent on the values of the following $\pi$'s: $$\pi_{11} = 1 - \rho_1 \frac{\sigma_{C_1}}{\sigma_{T_1}}, \hspace{0.2cm} \pi_{12} = 1 - \rho_1 \frac{\sigma_{T_1}}{\sigma_{C_1}}, \hspace{0.2cm} \pi_{21} = 1 - \rho_2 \frac{\sigma_{C_2}}{\sigma_{T_2}}, \hspace{0.2cm} \pi_{22} = 1 - \rho_2 \frac{\sigma_{T_2}}{\sigma_{C_2}}.$$
\newpage
\noindent \emph{Case 1:} All $\pi_{jk} \text{ with }j,k = 1,2$ are strictly positive. Note that the positivity of $\pi_{j1}$ $(j=1,2)$ allows us to rewrite the argument of $\Phi(\cdot)$ as:

$$
\frac{y  - \frac{x^\top{\beta_{C_j}} + z\alpha_{C_j} + v\lambda_{C_j} - \rho_j \frac{\sigma_{C_j}}{\sigma_{T_j}}\big(x^\top{\beta_{T_j}} + z\alpha_{T_j} + v\lambda_{T_j}\big)}{\pi_{j1}}}{\sqrt{\frac{\sigma_{C_j}^2(1-\rho_j^2)}{\pi_{j1}^2}}}.
$$
Therefore, we define $\xi_{j1}$ $(j = 1,2),$ whose distribution for a given $(W,Z)$ is specified as:
$$
(\xi_{j1} \mid W, Z)\sim N \Bigg(\frac{x^\top{\beta_{C_j}} + z\alpha_{C_j} + v\lambda_{C_j} - \rho_j \frac{\sigma_{C_j}}{\sigma_{T_j}}\big(x^\top{\beta_{T_j}} + z\alpha_{T_j} + v\lambda_{T_j}\big)}{\pi_{j1}},\frac{\sigma_{C_j}^2(1-\rho_j^2)}{\pi_{j1}^2}\Bigg).
$$
This allows us to rewrite the sub-density as follows:
\begin{equation}\label{prob1}
	f_{Y_j,\Delta_j \mid W,Z}(y,1 \mid w,z,\gamma ;\theta_j)=\mathbb{P}(\xi_{j1} > y \mid W=w, Z=z)f_{T_j \mid W,Z}(y \mid w,z,\gamma;\theta_j).
\end{equation}
Since $f_{Y_1,\Delta_1 \mid X,Z,V}(y,1 \mid w,z,\gamma ;\theta_1) = f_{Y_2,\Delta_2 \mid W,Z}(y,1 \mid w,z,\gamma ;\theta_2)$ for almost every $(w,z)$, it follows from \eqref{prob1} that $$\lim_{y\to-\infty}\frac{f_{T_1 \mid W,Z}(y \mid w,z,\gamma;\theta_1)}{ f_{T_2 \mid W,Z}(y \mid w,z,\gamma;\theta_2)} = 1 \text{ for almost every }(w,z).$$ Because of Proposition A.1. by \citet{DeresaNegeraWakgari2020Fpmf}, it is implied that: $$\beta_{T_1} = \beta_{T_2}, \hspace{0.2cm}\alpha_{T_1} = \alpha_{T_2}, \hspace{0.2cm}\lambda_{T_1} = \lambda_{T_2}, \hspace{0.2cm} \sigma_{T_1} = \sigma_{T_2}.$$ Putting $\Delta_j=0$ and repeating the same arguments (with $\pi_{j2}>0,\hspace{0.1cm}j=1,2)$, we get that: $$\beta_{C_1} = \beta_{C_2}, \hspace{0.2cm}\alpha_{C_1} = \alpha_{C_2}, \hspace{0.2cm}\lambda_{C_1} = \lambda_{C_2}, \hspace{0.2cm} \sigma_{C_1} = \sigma_{C_2}.$$ 
Using the expression for $F_{Y \mid W,Z}(y \mid w,z,\gamma ;\theta)$ from Section \ref{secdistr}, we know that:
\begin{align*}
	&\Phi\bigg(\frac{y-x^\top{\beta_{T_1}} - z\alpha_{T_1} - v\lambda_{T_1}}{\sigma_{T_1}}, \frac{y-x^\top{\beta_{C_1}} - z\alpha_{C_1} - v\lambda_{C_1}}{\sigma_{C_1}}; \rho_1\bigg) \\&= \Phi\bigg(\frac{y-x^\top{\beta_{T_2}} - z\alpha_{T_2} - v\lambda_{T_2}}{\sigma_{T_2}}, \frac{y-x^\top{\beta_{C_2}} - z\alpha_{C_2} - v\lambda_{C_2}}{\sigma_{C_2}}; \rho_2\bigg),
\end{align*}
for almost every $(w,z)$. From this it is clear that $\rho_1 = \rho_2,$ and thus $\theta_1=\theta_2$.

\bigskip \noindent \emph{Case 2:} One of $\pi_{jk} \text{ with }j,k = 1,2$ is strictly negative and the others are strictly positive. Firstly, we assume that  $\pi_{11} < 0,$ and therefore it must be the case that $\pi_{21} > 0.$ From \emph{Case 1} we know that the positivity of $\pi_{21}$ implies that
$$
f_{Y_2,\Delta_2 \mid W,Z}(y,1 \mid w,z,\gamma ;\theta_2)=\mathbb{P}(\xi_{21} > y \mid W=w, Z=z)f_{T_2 \mid W,Z}(y \mid w,z,\gamma;\theta_2).
$$
Note that the negativity of $\pi_{11}$ allows us to rewrite the argument of $\Phi(\cdot)$ as
$$
\frac{-y  - \frac{-\Big[x^\top{\beta_{C_1}} + z\alpha_{C_1} + v\lambda_{C_1} - \rho_1 \frac{\sigma_{C_1}}{\sigma_{T_1}}\big(x^\top{\beta_{T_1}} + z\alpha_{T_1} + v\lambda_{T_1}\big)\Big]}{\pi_{11}}}{\sqrt{\frac{\sigma_{C_1}^2(1-\rho_1^2)}{\pi_{11}^2}}}.
$$
As before, we define a variable $\zeta_{j1}$ $(j = 1,2)$ whose distribution for given $(W,Z)$ is specified as:
$$
(\zeta_{j1} \mid W, Z)\sim N \Bigg(\frac{-\Big[x^\top{\beta_{C_j}} + z\alpha_{C_j} + v\lambda_{C_j} - \rho_j \frac{\sigma_{C_j}}{\sigma_{T_j}}\big(x^\top{\beta_{T_j}} + z\alpha_{T_j} + v\lambda_{T_j}\big)\Big]}{\pi_{j1}},\frac{\sigma_{C_j}^2(1-\rho_j^2)}{\pi_{j1}^2}\Bigg).
$$
This can be used to find that:
\begin{align*}
	f_{Y_1,\Delta_1 \mid W,Z,}(y,1 \mid w,z,\gamma ;\theta_1)&=\mathbb{P}(\zeta_{11} > -y \mid W=w, Z=z)f_{T_1 \mid W,Z}(y \mid w,z,\gamma;\theta_1), \\& =\mathbb{P}(-\zeta_{11} < y \mid W=w, Z=z)f_{T_1 \mid W,Z}(y \mid w,z,\gamma;\theta_1),
	\\&=\mathbb{P}(\xi_{11} < y \mid W=w, Z=z)f_{T_1 \mid W,Z}(y \mid w,z,\gamma;\theta_1).
\end{align*}
Because $f_{Y_1,\Delta_1 \mid W,Z}(y,1 \mid w,z,\gamma ;\theta_1) = f_{Y_2,\Delta_2 \mid W,Z}(y,1 \mid w,z,\gamma ;\theta_2),$ we have that:
$$
\mathbb{P}(\xi_{21} > y \mid W=w, Z=z) = \mathbb{P}(\xi_{11} < y \mid W=w, Z=z)\times \frac{f_{T_1 \mid W,Z}(y \mid w,z,\gamma;\theta_1)}{f_{T_2 \mid W,Z}(y \mid w,z,\gamma;\theta_2)}.
$$
Taking the limit on both sides when $y$ approaches $-\infty$, it follows that the left-hand side goes to 1. However, Lemma 2.4 by \citet{BasuA.P1978Iotm} shows that the right hand side does not go to 1, leading to a contradiction. Using the same arguments, it can be shown that one strictly negative $\pi$ and three strictly positive $\pi$'s always leads to a contradiction.

\bigskip \noindent \emph{Case 3:} Two of $\pi_{jk} \text{ with }j,k = 1,2$ are strictly negative and the other two are strictly positive. When $\pi_{11} \text{ and } \pi_{12}$ are both either strictly positive or strictly negative, \emph{Case 2} shows that this leads to a contradiction. Therefore, one of $\pi_{11}, \pi_{12}$ and one of $\pi_{21}, \pi_{22}$ is strictly positive. Assuming $\pi_{11}>0$ and $\pi_{12} < 0,$ we know from \emph{Case 2} that if $\pi_{21} < 0$ we get a contradiction. Hence, we further assume that $\pi_{21}>0 \text{ and } \pi_{22} < 0.$ We now define 
$$
(\zeta_{j2} \mid W, Z)\sim N \Bigg(\frac{-\Big[x^\top{\beta_{T_j}} + z\alpha_{T_j} + v\lambda_{T_j} - \rho_j \frac{\sigma_{T_j}}{\sigma_{C_j}}\big(x^\top{\beta_{C_j}} + z\alpha_{C_j} + v\lambda_{C_j}\big)\Big]}{\pi_{j2}},\frac{\sigma_{T_j}^2(1-\rho_j^2)}{\pi_{j2}^2}\Bigg),
$$
$$
(\xi_{j2} \mid W, Z)\sim N \Bigg(\frac{\Big[x^\top{\beta_{T_j}} + z\alpha_{T_j} + v\lambda_{T_j} - \rho_j \frac{\sigma_{T_j}}{\sigma_{C_j}}\big(x^\top{\beta_{C_j}} + z\alpha_{C_j} + v\lambda_{C_j}\big)\Big]}{\pi_{j2}},\frac{\sigma_{T_j}^2(1-\rho_j^2)}{\pi_{j2}^2}\Bigg).
$$
Since $f_{Y_1,\Delta_1 \mid W,Z}(y,0 \mid w,z,\gamma ;\theta_1) = f_{Y_2,\Delta_2 \mid W,Z}(y,0 \mid w,z,\gamma ;\theta_2),$ the same arguments as before can be used to show that:
\begin{align*}
	\mathbb{P}(\xi_{12} < y \mid W=w, Z=z)\times\frac{f_{C_1 \mid W,Z}(y \mid w,z,\gamma;\theta_1)}{f_{C_2 \mid W,Z}(y \mid w,z,\gamma;\theta_2)}=\mathbb{P}(\xi_{22} < y \mid W=w, Z=z).
\end{align*}
It follows that $$\lim_{y\to+\infty}\frac{f_{C_1 \mid W,Z}(y \mid w,z,\gamma;\theta_1)}{f_{C_2 \mid W,Z}(y \mid w,z,\gamma;\theta_2)} = 1 \text{ for almost every }(w,z).$$ Application of Proposition A.1. by \citet{DeresaNegeraWakgari2020Fpmf} implies that $$\beta_{C_1} = \beta_{C_2}, \hspace{0.2cm}\alpha_{C_1} = \alpha_{C_2}, \hspace{0.2cm}\lambda_{C_1} = \lambda_{C_2}, \hspace{0.2cm} \sigma_{C_1} = \sigma_{C_2}.$$ The result for $\Delta_j = 1$ and $\pi_{11}, \pi_{21} > 0$ was already discussed in \emph{Case 1}. Combining these, it is clear that $\theta_1=\theta_2$.
It follows that this argument can be replicated for $ \pi_{11}, \pi_{21} < 0$ and $\pi_{12}, \pi_{22} > 0.$

\bigskip \noindent \emph{Case 4:} Three or four of $\pi_{jk} \text{ with }j,k = 1,2$ are strictly negative and one or none of the $\pi$'s are strictly positive, respectively. This immediately leads to a contradiction since
$$
\rho_1 \frac{\sigma_{C_1}}{\sigma_{T_1}}\rho_1 \frac{\sigma_{T_1}}{\sigma_{C_1}}=\rho_1^2 < 1,
$$
which implies that $$\rho_1 \frac{\sigma_{C_1}}{\sigma_{T_1}} <1 \text{ or } \rho_1 \frac{\sigma_{T_1}}{\sigma_{C_1}} < 1,$$ meaning that at least one of $\pi_{11}$ and $\pi_{12}$ is strictly positive. This also is the case for $\pi_{21}$ and $\pi_{22}$. Hence, there are always at least two strictly positive $\pi$'s and we have a contradiction.

\bigskip \noindent \emph{Case 5:} One of $\pi_{jk} \text{ with }j,k = 1,2$ is equal to zero. Without loss of generality, we assume that $\pi_{11}=0$. 
Note that $\pi_{11}=0$ implies that:
\begin{align*}
f_{Y_1,\Delta_1 \mid W,Z}(y,1 \mid w,z,\gamma ;\theta_1) = & \Bigg[1-\Phi \Bigg(\frac{-x^\top{\beta_{C_1}} - z\alpha_{C_1} - v\lambda_{C_1} + x^\top{\beta_{T_1}} + z\alpha_{T_1} + v\lambda_{T_1}\big)}{\sigma_{C_1}(1-{\rho_1}^2)^{\sfrac{1}{2}}}\Bigg)\Bigg] \\ & \times \frac{1}{\sigma_{T_1}} \phi\bigg(\frac{y-x^\top{\beta_{T_1}} - z\alpha_{T_1} - v\lambda_{T_1}}{\sigma_{T_1}}\bigg).
\end{align*}
Because $y$ is no longer included in the argument of $\Phi(\cdot)$, we can rewrite this as
$$f_{Y_1,\Delta_1 \mid W,Z}(y,1 \mid w,z,\gamma  ;\theta_1) =p\times\frac{1}{\sigma_{T_1}} \phi\bigg(\frac{y-x^\top{\beta_{T_1}} - z\alpha_{T_1} - v\lambda_{T_1}}{\sigma_{T_1}}\bigg), \text{ with }0< p<1.$$
Using what we have learned from the previous cases, and assuming that $\pi_{21}>0$, we have that
$$
p\times f_{T_1 \mid W,Z}(y \mid w,z,\gamma ;\theta_1)=\mathbb{P}(\xi_{21} > y \mid W=w, Z=z)f_{T_2 \mid W,Z}(y \mid w,z,\gamma;\theta_2).
$$
Taking the limit where $y \to -\infty$ we get that
$$
p = \lim_{y\to-\infty} \frac{f_{T_2 \mid W,Z}(y \mid w,z,\gamma;\theta_2)}{f_{T_1 \mid W,Z}(y \mid w,z,\gamma ;\theta_1)} \text{ for almost every }(w,z).
$$
Note that this is a contradiction since, according to Lemma 2.3 by \citet{BasuA.P1978Iotm}, the right hand side of the equation can only be equal to $0,1 \text{ or } \infty$. It follows that this argument can be replicated when $\pi_{21} < 0$.

\bigskip \noindent \emph{Case 6:} Two of $\pi_{jk}, \text{ with }j,k = 1,2$ are equal to zero. Note that it cannot be the case that $\pi_{j1} \text{ and } \pi_{j2}$ are both zero since $\lvert \rho_j \rvert < 1$ with $j=1,2$. Therefore, one of $\pi_{11}, \pi_{12}$ and one of $\pi_{21}, \pi_{22}$ needs to be zero. To avoid contradictions, as seen in \emph{Case 5}, the only possibilities are $\pi_{11} = \pi_{21} = 0$ or $\pi_{12} = \pi_{22} = 0.$ Without loss of generality, we will assume that $\pi_{11} = \pi_{21} = 0.$ Using the same arguments as in \emph{Case 5}, it follows that
$$
\frac{p_1}{p_2} = \lim_{y\to \pm \infty} \frac{f_{T_2 \mid W,Z}(y \mid w,z,\gamma;\theta_2)}{f_{T_1 \mid W,Z}(y \mid w,z,\gamma ;\theta_1)}\text{ for almost every }(w,z),
$$
with $0<p_1,p_2<1.$ As discussed before, the right hand side can only be equal to $0,1 \text{ or } \infty.$ Therefore, the only way this does not lead to a contradiction is if the right hand side is equal to 1 and $p_1=p_2$. Using Proposition A.1. by \citet{DeresaNegeraWakgari2020Fpmf} combined with $p_1=p_2$, it can easily be shown that $\theta_1 = \theta_2.$

\bigskip \noindent \emph{Case 7:} Three or four of $\pi_{jk}, \text{ with } j,k = 1,2$ are equal to zero. Note that if $\pi_{11} = 0$, it must be the case that $\pi_{12} > 0$. The same holds for $\pi_{21}$ and $\pi_{12}$, leading to a contradiction. \QEDB

\subsection*{Proof of Theorem 2}
Note that (i) $\ell$ is continuous in $\theta$, since it consists of well known other continuous functions and the definition of $\Theta$. Because of the continuity of $\ell$ and Assumptions \ref{A7}-\ref{A8}, it is clear that $L(\gamma,\theta)$ is also continuous in $\theta$. Furthermore, Lemma \ref{lemma_uc} shows that (ii) $\hat{L}(\hat{\gamma},\theta)$ converges uniformly $(\text{in }\theta \in \Theta)$ in probability to $L(\gamma^*,\theta)$ under Assumptions \ref{A1}-\ref{A8}. Moreover, we also have that (iii) $L(\gamma^*,\theta)$ is uniquely maximized at $L(\gamma^*,\theta^*)$.
Indeed, let $\theta\in \Theta$ such that $\theta \ne \theta^*$ and define
$$
R = \frac{f_{Y,\Delta \mid X,Z,V}(Y,\Delta \mid X,Z,W,\gamma^*;\theta)}{f_{Y,\Delta \mid X,Z,V}(Y,\Delta \mid X,Z,W,\gamma^*;\theta^*)}.
$$
Assumptions \ref{A1}-\ref{A5} guarantee that $\theta^*$ is identifiable according to Theorem \ref{thrmident}. Hence, $R$ is not $1$ almost everywhere. As a result, by the strict version of Jensen's inequality, we have
$$
-\text{log}\big(\EX[R]\big) < \EX\big[-\text{log}(R)\big].
$$
This means that
$$
L(\gamma^*,\theta^*) - L(\gamma^*,\theta) = \EX\big[-\text{log}(R)\big] > -\text{log}\big(\EX[R]\big) = -\text{log}(1) = 0,
$$
which shows (iii). Given (i)-(iii) and Assumption \ref{A7}, Theorem 2.1 from \citet{newey1994large} tells us that
$$\hat{\theta}\xrightarrow{\text{ p }}\theta^*.$$ 
Combined with Assumption \ref{A6}, this leads to the desired result.
\QEDB

\subsection*{Proof of Theorem 3}
Since Assumptions \ref{A1}-\ref{A8} guarantee that Theorem \ref{thrmcons} holds, we know that (i) $\hat{\theta}\xrightarrow{\text{ p }}\theta^*$. By Assumption \ref{A6}, using straightforward calculus and the continuous mapping theorem, it can be shown that (ii) $\Tilde{h}(S,\gamma,\theta) = \big(h_m(W,Z,\gamma)^\top, h_\ell(S,\gamma,\theta)^\top  \big)^\top$ is continuously differentiable in a neighborhood $\mathcal{N}_{\gamma,\theta}$ of $(\gamma^*,\theta^*)$ with probability approaching one. Theorem \ref{thrmcons} also shows that $L(\gamma,\theta)$ is uniquely maximized at $\theta^*$ for a given $\gamma$, which combined with Assumptions \ref{A6} and \ref{A7} implies that (iii) $\EX\big[\Tilde{h}(S,\gamma^*,\theta^*)\big]=0$. Given (i)-(iii), combined with Assumptions \ref{A6}, \ref{A7}, \ref{A9} and \ref{A10}, Theorem 6.1 from \citet{newey1994large} tells us that 
$$
\sqrt{n}(\hat{\theta}-\theta^{*}) \xrightarrow{\text{ d }} N(0,\Sigma_\theta) \quad$$ with $$\quad \Sigma_\theta = H^{-1}_\theta \EX\big[\{h_\ell(S,\gamma^{*}, \theta^{*}) + H_\gamma \Psi\}\{h_\ell(S,\gamma^{*}, \theta^{*}) + H_\gamma \Psi\}^\top\big]\big({H^{-1}_\theta}\big)^\top.
$$ \QEDB

\section{Tables}\label{Tables}
\begin{table}[ht]
	\centering
	\caption{Estimation results for design 1 with 51\% censoring and 2500 simulations. Given are the bias, the empirical standard deviation (ESD), the root mean squared error (RMSE) and the confidence rate (CR).}
	\resizebox{\textwidth}{!}{
		\begin{tabular}{|c|rrrr|rrrr|rrrr|}
			\hline
			\multicolumn{5}{|c|}{$n=250$} & \multicolumn{4}{c|}{$n=500$} & \multicolumn{4}{c|}{$n=1000$}\\
			\hline
			\multicolumn{13}{|c|}{naive estimator} \\
			\hline
			& Bias & ESD & RMSE & CR & Bias & ESD & RMSE & CR & Bias & ESD & RMSE & CR \\ 
			\hline
			$\beta_{T,0}$ & -1.798 & 0.219  & 1.811 & 0.000 & -1.805 & 0.153 & 1.811 & 0.000 & -1.807 & 0.104 & 1.810 & 0.000  \\ 
			$\beta_{T,1}$ & -0.827 & 0.186  & 0.848 & 0.011 & -0.834 & 0.126  & 0.844 & 0.000 & -0.833 & 0.090 & 0.838 & 0.000\\ 
			$\alpha_T$ & 1.384 & 0.118 & 1.389 & 0.000 & 1.386 & 0.082 & 1.388 & 0.000 & 1.383 & 0.057 & 1.384 & 0.000 \\ 
			$\beta_{C,0}$ & -1.092 & 0.352  & 1.147 & 0.140 & -1.092 & 0.250 & 1.120 & 0.010 & -1.101 & 0.171 & 1.114 & 0.000 \\ 
			$\beta_{C,1}$ & -0.496 & 0.143  & 0.516 & 0.070 & -0.499 & 0.098 & 0.509 & 0.000 & -0.499 & 0.071 & 0.504 & 0.000  \\ 
			$\alpha_C$ & 0.835 & 0.099  & 0.841 & 0.000 & 0.834 & 0.070 & 0.837 & 0.000  & 0.838 & 0.048 & 0.840 & 0.000 \\ 
			$\sigma_T$ & 1.352 & 0.133  & 1.359 & 0.000 & 1.364 & 0.098 & 1.367 & 0.000  & 1.367 & 0.068 & 1.368 & 0.000  \\ 
			$\sigma_C$ & 0.513 & 0.103  & 0.523 & 0.000 & 0.522 & 0.072 & 0.527 & 0.000 & 0.526 & 0.051 & 0.529 & 0.000 \\ 
			$\rho$ & 0.103 & 0.076  & 0.128 & 0.818 & 0.107 & 0.052 & 0.119 & 0.618  & 0.110 & 0.035 & 0.115 & 0.317 \\ 
			\hline
			\multicolumn{13}{|c|}{independent estimator} \\
			\hline
			$\beta_{T,0}$ & 0.486 & 0.421  & 0.643 & 0.858 & 0.466 & 0.293 & 0.551 & 0.676 & 0.458 & 0.209  & 0.503 & 0.389 \\ 
			$\beta_{T,1}$ & 0.139 & 0.322  & 0.351 & 0.942 & 0.130 & 0.227 & 0.261 & 0.922 & 0.129 & 0.156  & 0.202 & 0.884  \\ 
			$\alpha_T$ & 0.036 & 0.220  & 0.222 & 0.916 & 0.041 & 0.153 & 0.158 & 0.923 & 0.050 & 0.107  & 0.118 & 0.904 \\ 
			$\lambda_T$ & 0.161 & 0.230  & 0.281 & 0.952  & 0.155 & 0.160 & 0.223 & 0.896 & 0.147 & 0.112 & 0.185 & 0.794 \\ 
			$\beta_{C,0}$ & 0.539 & 0.272 & 0.603 & 0.509 & 0.531 & 0.191 & 0.564 & 0.188 & 0.522 & 0.138 & 0.540 & 0.020 \\ 
			$\beta_{C,1}$ & -0.111 & 0.205 & 0.233 & 0.888 & -0.115 & 0.144 & 0.184 & 0.854 & -0.116 & 0.103 & 0.155 & 0.772 \\ 
			$\alpha_C$ & -0.059 & 0.137 & 0.149 & 0.956 & -0.059 & 0.097 & 0.114 & 0.938 & -0.052 & 0.069  & 0.086 & 0.912 \\ 
			$\lambda_C$ & -0.128 & 0.151 & 0.198 & 0.816 & -0.128 & 0.107 & 0.166 & 0.725 & -0.131 & 0.077  & 0.152 & 0.554\\ 
			$\sigma_T$ & 0.019 & 0.070 & 0.073 & 0.941 & 0.026 & 0.051 & 0.057 & 0.908 & 0.033 & 0.036  & 0.049 & 0.844 \\ 
			$\sigma_C$ & 0.069 & 0.093 & 0.116 & 0.872 & 0.082 & 0.063 & 0.103 & 0.748 & 0.087 & 0.046  & 0.098 & 0.515  \\ 
			\hline
			\multicolumn{13}{|c|}{oracle estimator} \\
			\hline
			$\beta_{T,0}$ & 0.000 & 0.154  & 0.154 & 0.952 & 0.001 & 0.109  & 0.109 & 0.944 & 0.001 & 0.076  & 0.076 & 0.950 \\ 
			$\beta_{T,1}$ & 0.001 & 0.100  & 0.100 & 0.949 & -0.000 & 0.069  & 0.069 & 0.954 & -0.001 & 0.049  & 0.049 & 0.946 \\ 
			$\alpha_T$ & -0.001 & 0.066  & 0.066 & 0.941 & -0.001 & 0.045 & 0.045 & 0.952 & 0.001 & 0.032  & 0.032 & 0.956 \\ 
			$\lambda_T$  & -0.000 & 0.088  & 0.088 & 0.946 & 0.001 & 0.061  & 0.061 & 0.950 & 0.000 & 0.043  & 0.043 & 0.950 \\ 
			$\beta_{C,0}$ & -0.003 & 0.210  & 0.210 & 0.941 & 0.000 & 0.147  & 0.147 & 0.946 & -0.000 & 0.102  & 0.102 & 0.954 \\ 
			$\beta_{C,1}$ & -0.001 & 0.122  & 0.122 & 0.946 & -0.000 & 0.085  & 0.085 & 0.946 & 0.000 & 0.060  & 0.060 & 0.950 \\ 
			$\alpha_C$ & 0.002 & 0.078  & 0.078 & 0.946 & -0.000 & 0.056  & 0.056 & 0.945 & 0.000 & 0.040  & 0.040 & 0.942 \\ 
			$\lambda_C$ & -0.002 & 0.104  & 0.104 & 0.949 & 0.000 & 0.074  & 0.074 & 0.950 & -0.001 & 0.054  & 0.054 & 0.936 \\ 
			$\sigma_T$ & -0.010 & 0.063  & 0.064 & 0.950 & -0.007 & 0.044  & 0.044 & 0.946 & -0.003 & 0.032  & 0.032 & 0.942 \\ 
			$\sigma_C$ & -0.019 & 0.079  & 0.081 & 0.939 & -0.010 & 0.055  & 0.056 & 0.944 & -0.004 & 0.039  & 0.040 & 0.950 \\ 
			$\rho$ & 0.001 & 0.099  & 0.099 & 0.939 & -0.002 & 0.067  & 0.067 & 0.943 & -0.001 & 0.046  & 0.046 & 0.952 \\ 
			\hline
			\multicolumn{13}{|c|}{two-step estimator} \\
			\hline
			$\beta_{T,0}$ & 0.017 & 0.385  & 0.385 & 0.949 & 0.014 & 0.279  & 0.279 & 0.949 & 0.005 & 0.190  & 0.190 & 0.952 \\ 
			$\beta_{T,1}$ & 0.010 & 0.299  & 0.299 & 0.946 & 0.003 & 0.213  & 0.212 & 0.950 & 0.002 & 0.148  & 0.148 & 0.950 \\ 
			$\alpha_T$ & -0.016 & 0.213  & 0.214 & 0.946 & -0.012 & 0.147  & 0.148 & 0.945 & -0.006 & 0.099  & 0.099 & 0.962 \\ 
			$\lambda_T$ & 0.014 & 0.219  & 0.219 & 0.949 & 0.012 & 0.154  & 0.155 & 0.946 & 0.006 & 0.103  & 0.103 & 0.956 \\ 
			$\beta_{C,0}$ & 0.008 & 0.300  & 0.300 & 0.949 & 0.008 & 0.212  & 0.212 & 0.950 & 0.003 & 0.146  & 0.146 & 0.953 \\ 
			$\beta_{C,1}$ & 0.004 & 0.209  & 0.209 & 0.940 & 0.002 & 0.150  & 0.150 & 0.932 & 0.003 & 0.103  & 0.103 & 0.950 \\ 
			$\alpha_C$ & -0.008 & 0.145  & 0.145 & 0.942 & -0.007 & 0.102  & 0.103 & 0.944 & -0.003 & 0.069  & 0.069 & 0.957 \\ 
			$\lambda_C$ & 0.008 & 0.162  & 0.163 & 0.953 & 0.007 & 0.112  & 0.112 & 0.947 & 0.003 & 0.078  & 0.078 & 0.950 \\ 
			$\sigma_T$ & -0.010 & 0.063  & 0.064 & 0.942 & -0.007 & 0.044  & 0.044 & 0.942 & -0.003 & 0.032  & 0.032 & 0.937 \\ 
			$\sigma_C$ & -0.019 & 0.079  & 0.081 & 0.938 & -0.010 & 0.055  & 0.056 & 0.939 & -0.004 & 0.039  & 0.040 & 0.948 \\ 
			$\rho$ & 0.001 & 0.099 & 0.099 & 0.928 & -0.002 & 0.067  & 0.067 & 0.938 & -0.001 & 0.046  & 0.046 & 0.950 \\ 
			\hline
		\end{tabular}
	}
\end{table}
\begin{table}[ht]
	\centering
	\caption{Estimation results for design 2 with 46\% censoring and 2500 simulations. Given are the bias, the empirical standard deviation (ESD), the root mean squared error (RMSE) and the confidence rate (CR).}
	\resizebox{\textwidth}{!}{
		\begin{tabular}{|c|rrrr|rrrr|rrrr|}
			\hline
			\multicolumn{5}{|c|}{$n=250$} & \multicolumn{4}{c|}{$n=500$} & \multicolumn{4}{c|}{$n=1000$}\\
			\hline
			\multicolumn{13}{|c|}{naive estimator} \\
			\hline
			& Bias & ESD & RMSE & CR & Bias & ESD & RMSE & CR & Bias & ESD & RMSE & CR \\ 
			\hline
			$\beta_{T,0}$ & 3.072 & 0.726  & 3.156 & 0.010 & 3.061 & 0.451  & 3.094 & 0.000 & 3.075 & 0.304  & 3.090 & 0.000 \\ 
			$\beta_{T,1}$ & 0.368 & 0.257  & 0.449 & 0.602 & 0.359 & 0.182 & 0.403 & 0.397 & 0.362 & 0.125  & 0.383 & 0.129 \\
			$\alpha_T$ & -4.387 & 0.637  & 4.433 & 0.003 & -4.399 & 0.357 & 4.414 & 0.000 & -4.413 & 0.239  & 4.419 & 0.000 \\ 
			$\beta_{C,0}$ & 1.953 & 0.388  & 1.991 & 0.006 & 1.974 & 0.190 & 1.983 & 0.000 & 1.977 & 0.116  & 1.980 & 0.000  \\ 
			$\beta_{C,1}$ & 0.283 & 0.262  & 0.386 & 0.742 & 0.304 & 0.195  & 0.361 & 0.564 & 0.303 & 0.135  & 0.331 & 0.335 \\ 
			$\alpha_C$ & -2.810 & 0.504  & 2.855 & 0.002 & -2.872 & 0.327  & 2.890 & 0.000 & -2.869 & 0.217  & 2.877 & 0.000 \\ 
			$\sigma_T$ & 0.489 & 0.132  & 0.507 & 0.000 & 0.474 & 0.070  & 0.479 & 0.000 & 0.473 & 0.046  & 0.475 & 0.000 \\ 
			$\sigma_C$ & 0.188 & 0.129  & 0.228 & 0.494 & 0.178 & 0.069  & 0.191 & 0.153 & 0.180 & 0.046  & 0.185 & 0.009 \\
			$\rho$ & 0.013 & 0.253 & 0.253 & 0.947 & 0.055 & 0.165 & 0.174 & 0.976 & 0.070 & 0.107 & 0.127 & 0.987 \\
			\hline
			\multicolumn{13}{|c|}{independent estimator} \\
			\hline
			$\beta_{T,0}$ & 0.455 & 0.720 & 0.852 & 0.820 & 0.482 & 0.478  & 0.678 & 0.770 & 0.496 & 0.346  & 0.605 & 0.640 \\ 
			$\beta_{T,1}$ & 0.233 & 0.209  & 0.313 & 0.748 & 0.239 & 0.143  & 0.279 & 0.578 & 0.237 & 0.100  & 0.257 & 0.358 \\ 
			$\alpha_T$ & 0.037 & 0.928  & 0.928 & 0.938 & -0.003 & 0.611  & 0.611 & 0.944 & -0.017 & 0.441  & 0.441 & 0.937 \\  
			$\lambda_T$& 0.256 & 0.354  & 0.437 & 0.959 & 0.236 & 0.234  & 0.332 & 0.909 & 0.230 & 0.169  & 0.285 & 0.783  \\ 
			$\beta_{C,0}$ & 0.557 & 0.515  & 0.758 & 0.738 & 0.547 & 0.358  & 0.654 & 0.631 & 0.536 & 0.253 & 0.593 & 0.414 \\ 
			$\beta_{C,1}$  & -0.300 & 0.169  & 0.345 & 0.568 & -0.294 & 0.118 & 0.316 & 0.275 & -0.293 & 0.082 & 0.305 & 0.044 \\ 
			$\alpha_C$ & 0.012 & 0.679  & 0.679 & 0.949 & 0.015 & 0.474 & 0.474 & 0.944 & 0.029 & 0.331 & 0.333 & 0.956  \\ 
			$\lambda_C$ & -0.228 & 0.253 & 0.340 & 0.787 & -0.223 & 0.177  & 0.285 & 0.712 & -0.216 & 0.122  & 0.248 & 0.562 \\ 
			$\sigma_T$ & 0.052 & 0.071  & 0.088 & 0.874 & 0.057 & 0.050  & 0.076 & 0.778 & 0.060 & 0.036  & 0.070 & 0.580 \\ 
			$\sigma_C$ & 0.197 & 0.105 & 0.224 & 0.478 & 0.205 & 0.072  & 0.217 & 0.150 & 0.211 & 0.054  & 0.218 & 0.010  \\ 
			\hline
			\multicolumn{13}{|c|}{oracle estimator} \\
			\hline
			$\beta_{T,0}$ & -0.003 & 0.300  & 0.300 & 0.944 & -0.000 & 0.207  & 0.207 & 0.950 & 0.003 & 0.146  & 0.146 & 0.950  \\ 
			$\beta_{T,1}$ & -0.003 & 0.119  & 0.119 & 0.941 & -0.002 & 0.084  & 0.084 & 0.944 & -0.001 & 0.057  & 0.057 & 0.956 \\ 
			$\alpha_T$ & -0.000 & 0.356  & 0.356 & 0.950 & -0.003 & 0.251  & 0.251 & 0.946 & -0.004 & 0.174  & 0.174 & 0.947 \\  
			$\lambda_T$ & -0.001 & 0.166  & 0.166 & 0.948 & -0.000 & 0.118  & 0.118 & 0.942 & -0.001 & 0.081  & 0.081 & 0.957 \\ 
			$\beta_{C,0}$ & 0.008 & 0.388  & 0.388 & 0.945& -0.000 & 0.264  & 0.264 & 0.950 & 0.005 & 0.187  & 0.187 & 0.954 \\ 
			$\beta_{C,1}$ & 0.005 & 0.150  & 0.150 & 0.949 & 0.002 & 0.104  & 0.104 & 0.945 & 0.001 & 0.075  & 0.075 & 0.946 \\ 
			$\alpha_C$ & -0.017 & 0.450  & 0.450 & 0.944 & -0.007 & 0.306  & 0.306 & 0.947 & -0.007 & 0.220  & 0.220 & 0.949\\ 
			$\lambda_C$ & -0.001 & 0.192  & 0.192 & 0.945 & 0.001 & 0.133  & 0.133 & 0.944 & -0.001 & 0.093  & 0.093 & 0.946\\ 
			$\sigma_T$ & -0.006 & 0.058  & 0.059 & 0.948 & -0.004 & 0.041  & 0.041 & 0.948 & -0.002 & 0.030  & 0.030 & 0.942\\ 
			$\sigma_C$ & -0.018 & 0.093  & 0.095 & 0.931 & -0.010 & 0.066  & 0.067 & 0.939 & -0.004 & 0.047  & 0.047 & 0.944\\ 
			$\rho$ & -0.011 & 0.146  & 0.146 & 0.972 & -0.005 & 0.103  & 0.103 & 0.960 & -0.004 & 0.071  & 0.071 & 0.961 \\ 
			\hline
			\multicolumn{13}{|c|}{two-step estimator} \\
			\hline
			$\beta_{T,0}$ & -0.064 & 0.619  & 0.622 & 0.957 & -0.019 & 0.428  & 0.428 & 0.948 & -0.009 & 0.301  & 0.301 & 0.948 \\ 
			$\beta_{T,1}$ & -0.009 & 0.205  & 0.205 & 0.944 & -0.009 & 0.142  & 0.143 & 0.952 & -0.004 & 0.102  & 0.102 & 0.944 \\ 
			$\alpha_T$ & 0.071 & 0.804  & 0.807 & 0.940 & 0.019 & 0.556  & 0.556 & 0.942 & 0.008 & 0.386  & 0.386 & 0.952 \\  
			$\lambda_T$ & 0.023 & 0.325  & 0.326 & 0.943 & 0.006 & 0.225  & 0.225 & 0.940 & 0.003 & 0.155  & 0.155 & 0.954 \\ 
			$\beta_{C,0}$ & -0.013 & 0.506  & 0.506 & 0.950 & -0.005 & 0.341  & 0.341 & 0.949 & 0.001 & 0.245  & 0.245 & 0.950 \\ 
			$\beta_{C,1}$ & 0.001 & 0.181  & 0.181 & 0.947 & -0.001 & 0.124  & 0.124 & 0.948 & -0.001 & 0.088  & 0.088 & 0.948 \\ 
			$\alpha_C$ & 0.010 & 0.635  & 0.635 & 0.946 & -0.001 & 0.429  & 0.429 & 0.948 & -0.003 & 0.307 & 0.307 & 0.949 \\  
			$\lambda_C$ & 0.004 & 0.252  & 0.252 & 0.945 & 0.002 & 0.172  & 0.172 & 0.956 & -0.000 & 0.123  & 0.122 & 0.947 \\  
			$\sigma_T$ & -0.002 & 0.059  & 0.059 & 0.948 & -0.002 & 0.041  & 0.041 & 0.950 & -0.001 & 0.030  & 0.030 & 0.940 \\ 
			$\sigma_C$ & -0.015 & 0.094  & 0.095 & 0.926 & -0.009 & 0.067  & 0.068 & 0.936 & -0.003 & 0.047  & 0.047 & 0.940 \\ 
			$\rho$ & -0.013 & 0.150  & 0.150 & 0.970 & -0.004 & 0.104  & 0.104 & 0.962 & -0.004 & 0.073  & 0.073 & 0.954 \\ 
			\hline
		\end{tabular}
	}
\end{table}
\begin{table}[ht]
	\centering
	\caption{Estimation results for design 3 with 46\% censoring and 2500 simulations. Given are the bias, the empirical standard deviation (ESD), the root mean squared error (RMSE) and the confidence rate (CR).}
	\resizebox{\textwidth}{!}{
		\begin{tabular}{|c|rrrr|rrrr|rrrr|}
			\hline
			\multicolumn{5}{|c|}{$n=250$} & \multicolumn{4}{c|}{$n=500$} & \multicolumn{4}{c|}{$n=1000$}\\
			\hline
			\multicolumn{13}{|c|}{naive estimator} \\
			\hline
			& Bias & ESD & RMSE & CR & Bias & ESD & RMSE & CR & Bias & ESD & RMSE & CR \\ 
			\hline
			$\beta_{T,0}$ & -0.226 & 0.244 & 0.333 & 0.815 & -0.231 & 0.167 & 0.285 & 0.714 & -0.233 & 0.119 & 0.262 & 0.509 \\ 
			$\beta_{T,1}$ & -0.902 & 0.163  & 0.916 & 0.001 & -0.902 & 0.116 & 0.910 & 0.000 & -0.903 & 0.082 & 0.906 & 0.000  \\ 
			$\alpha_T$ & 1.499 & 0.102 & 1.502 & 0.000 & 1.499 & 0.073 & 1.500 & 0.000 & 1.497 & 0.051 & 1.498 & 0.000 \\ 
			$\beta_{C,0}$ & -0.144 & 0.279  & 0.314 & 0.914 & -0.148 & 0.195 & 0.245 & 0.886 & -0.153 & 0.138 & 0.206 & 0.801 \\ 
			$\beta_{C,1}$ & -0.541 & 0.140  & 0.559 & 0.024 & -0.542 & 0.097 & 0.550 & 0.000 & -0.539 & 0.069 & 0.543 & 0.000 \\ 
			$\alpha_C$ & 0.905 & 0.103  & 0.911 & 0.000 & 0.906 & 0.073 & 0.909 & 0.000 & 0.909 & 0.052 & 0.910 & 0.000  \\ 
			$\sigma_T$ & 1.165 & 0.107  & 1.170 & 0.000 & 1.172 & 0.075 & 1.175 & 0.000 & 1.176 & 0.054 & 1.178 & 0.000 \\ 
			$\sigma_C$ & 0.425 & 0.096  & 0.435 & 0.002 & 0.432 & 0.068 & 0.437 & 0.000 & 0.439 & 0.049 & 0.442 & 0.000 \\ 
			$\rho$ & 0.091 & 0.077  & 0.119 & 0.860 & 0.093 & 0.054 & 0.108 & 0.716 & 0.097 & 0.036 & 0.104 & 0.443 \\  
			\hline
			\multicolumn{13}{|c|}{independent estimator} \\
			\hline
			$\beta_{T,0}$ & 0.444 & 0.309  & 0.541 & 0.712 & 0.429 & 0.215 & 0.480 & 0.481 & 0.431 & 0.151  & 0.456 & 0.180 \\ 
			$\beta_{T,1}$ & 0.135 & 0.331  & 0.357 & 0.954 & 0.118 & 0.230 & 0.258 & 0.939 & 0.116 & 0.157  & 0.195 & 0.906 \\ 
			$\alpha_T$ & 0.016 & 0.258 & 0.259 & 0.928 & 0.035 & 0.177 & 0.180 & 0.927 & 0.043 & 0.124  & 0.131 & 0.914 \\ 
			$\lambda_T$ & 0.160 & 0.263  & 0.308 & 0.968 & 0.141 & 0.180 & 0.229 & 0.939 & 0.134 & 0.127  & 0.185 & 0.869 \\ 
			$\beta_{C,0}$ & 0.571 & 0.213  & 0.610 & 0.216 & 0.561 & 0.147 & 0.580 & 0.024 & 0.560 & 0.106  & 0.570 & 0.000 \\ 
			$\beta_{C,1}$ & -0.121 & 0.217 & 0.249 & 0.883 & -0.133 & 0.150 & 0.201 & 0.819 & -0.129 & 0.107  & 0.167 & 0.756 \\ 
			$\alpha_C$ & -0.076 & 0.161  & 0.178 & 0.961 & -0.065 & 0.111 & 0.129 & 0.947 & -0.063 & 0.081  & 0.103 & 0.901 \\ 
			$\lambda_C$ & -0.139 & 0.175 & 0.223 & 0.821 & -0.148 & 0.121 & 0.191 & 0.740 & -0.146 & 0.088  & 0.171 & 0.578 \\ 
			$\sigma_T$  & 0.017 & 0.066  & 0.068 & 0.944 & 0.023 & 0.048 & 0.053 & 0.922 & 0.028 & 0.034 & 0.045 & 0.860 \\ 
			$\sigma_C$ & 0.080 & 0.099 & 0.127 & 0.854 & 0.094 & 0.068 & 0.116 & 0.711 & 0.100 & 0.049 & 0.111 & 0.454 \\ 
			\hline
			\multicolumn{13}{|c|}{oracle estimator} \\
			\hline
			$\beta_{T,0}$ & -0.003 & 0.130  & 0.130 & 0.946 & -0.001 & 0.092  & 0.092 & 0.953 & 0.001 & 0.064  & 0.064 & 0.950 \\ 
			$\beta_{T,1}$ & 0.002 & 0.099  & 0.099 & 0.948 & -0.002 & 0.069  & 0.069 & 0.948 & -0.001 & 0.048  & 0.048 & 0.952\\ 
			$\alpha_T$ & -0.003 & 0.072  & 0.072 & 0.946 & 0.001 & 0.050  & 0.050 & 0.955 & 0.000 & 0.036  & 0.036 & 0.953\\ 
			$\lambda_T$  & 0.001 & 0.090  & 0.090 & 0.950 & -0.001 & 0.063  & 0.063 & 0.952 & 0.000 & 0.044  & 0.044 & 0.949\\ 
			$\beta_{C,0}$ & -0.002 & 0.188  & 0.188 & 0.946 & 0.000 & 0.131  & 0.131 & 0.956 & 0.000 & 0.092  & 0.092 & 0.948 \\ 
			$\beta_{C,1}$ & -0.002 & 0.130  & 0.130 & 0.948 & 0.000 & 0.089  & 0.089 & 0.953 & -0.000 & 0.065  & 0.065 & 0.946 \\ 
			$\alpha_C$ & 0.001 & 0.092  & 0.092 & 0.953 & -0.000 & 0.066  & 0.066 & 0.951 & 0.000 & 0.046  & 0.046 & 0.954 \\ 
			$\lambda_C$ & -0.001 & 0.121  & 0.121 & 0.946 & -0.000 & 0.081  & 0.081 & 0.954 & -0.001 & 0.060  & 0.060 & 0.937 \\ 
			$\sigma_T$ & -0.010 & 0.061  & 0.061 & 0.942 & -0.006 & 0.043  & 0.043 & 0.943 & -0.003 & 0.031  & 0.031 & 0.938 \\ 
			$\sigma_C$ & -0.020 & 0.082  & 0.084 & 0.940 & -0.011 & 0.058  & 0.059 & 0.942 & -0.005 & 0.041  & 0.041 & 0.953 \\ 
			$\rho$ & 0.002 & 0.098  & 0.098 & 0.937 & -0.001 & 0.067  & 0.067 & 0.942 & -0.001 & 0.046  & 0.046 & 0.950 \\ 
			\hline
			\multicolumn{13}{|c|}{two-step estimator} \\
			\hline
			$\beta_{T,0}$ & -0.002 & 0.291  & 0.290 & 0.951 & -0.001 & 0.204  & 0.204 & 0.952 & -0.002 & 0.141  & 0.141 & 0.954 \\ 
			$\beta_{T,1}$ & 0.018 & 0.310  & 0.310 & 0.946 & 0.000 & 0.211  & 0.211 & 0.949 & 0.002 & 0.150  & 0.150 & 0.953 \\ 
			$\alpha_T$ & -0.029 & 0.243  & 0.245 & 0.954 & -0.011 & 0.167  & 0.167 & 0.950 & -0.005 & 0.115  & 0.116 & 0.956 \\ 
			$\lambda_T$ & 0.027 & 0.247  & 0.248 & 0.952 & 0.011 & 0.171  & 0.172 & 0.954 & 0.006 & 0.117  & 0.117 & 0.957 \\ 
			$\beta_{C,0}$ & -0.001 & 0.244  & 0.244 & 0.949 & 0.000 & 0.170  & 0.170 & 0.952 & -0.001 & 0.120  & 0.120 & 0.955 \\ 
			$\beta_{C,1}$ & 0.009 & 0.222  & 0.223 & 0.944 & 0.002 & 0.151  & 0.151 & 0.952 & 0.002 & 0.108  & 0.108 & 0.944 \\ 
			$\alpha_C$ & -0.015 & 0.171  & 0.172 & 0.950 & -0.007 & 0.116  & 0.116 & 0.952 & -0.003 & 0.080  & 0.080 & 0.958 \\ 
			$\lambda_C$ & 0.016 & 0.189  & 0.190 & 0.948 & 0.007 & 0.125  & 0.125 & 0.952 & 0.003 & 0.089  & 0.089 & 0.951 \\ 
			$\sigma_T$ & -0.010 & 0.061  & 0.061 & 0.940 & -0.006 & 0.043  & 0.043 & 0.938 & -0.003 & 0.031  & 0.031 & 0.936 \\ 
			$\sigma_C$ & -0.020 & 0.082  & 0.084 & 0.938 & -0.011 & 0.058  & 0.059 & 0.941 & -0.005 & 0.041  & 0.041 & 0.948 \\ 
			$\rho$ & 0.002 & 0.098  & 0.098 & 0.924 & -0.001 & 0.067  & 0.067 & 0.938 & -0.001 & 0.046  & 0.046 & 0.949 \\ 
			\hline
		\end{tabular}
	}
\end{table}
\end{document}